\def\bl{\boldsymbol{\ell}}
\def\bmu{\boldsymbol{\mu}}
\def\bgl{\boldsymbol{\lambda}}
\def\bc{\mathbf{c}}
\def\bi{\mathbf{i}}
\def\bk{\mathbf{k}}
\def\bq{\mathbf{Q}}
\def\bx{\mathbf{x}}
\def\by{\mathbf{y}}
\def\gl{\lambda}
\def\bz{\mathbb{Z}}
\def\fgl{\mathfrak{gl}}
\def\fg{\mathfrak{g}}
\def\fS{\mathfrak{S}}
\def\ft{\mathfrak{t}}
\def\mpn{\mathscr{P}_{m,n}}
\def\la{\langle}
\def\ra{\rangle}
\newenvironment{point}[2]%
  {\vspace{0.5\jot}\ifx*#2\let\pointlabel\relax\else\def\pointlabel{#2}\fi
   \refstepcounter{equation}\trivlist
   \item[\hskip\labelsep\theequation.
         \ifx\pointlabel\relax\else\space\pointlabel\space\fi]
   \ignorespaces #1
  \vspace{0.5\jot}}{\relax}
\numberwithin{equation}{section}
\newtheorem{theorem}[equation]{Theorem}
\newtheorem{lemma}[equation]{Lemma}
\newtheorem{proposition}[equation]{Proposition}
\theoremstyle{definition}
\newtheorem{definition}[equation]{Definition}
\theoremstyle{remark}
\newtheorem{remark}[equation]{Remark}
\begin{document}
\setlength{\itemsep}{1\jot}
\fontsize{13}{\baselineskip}\selectfont
\setlength{\parskip}{0.3\baselineskip}
\vspace*{0mm}
\title[Super Schur-Weyl duality]{\fontsize{9}{\baselineskip}\selectfont A super Frobenius formula for \\the characters of cyclotomic Hecke algebras}
\author{Deke Zhao}
\date{}
\address{School of Applied Mathematics, Beijing Normal University at Zhuhai, Zhuhai, 519087, China}
\email{ deke@amss.ac.cn}
\subjclass[2010]{Primary 20C99, 16G99; Secondary 05A99, 20C15}
\keywords{Frobenius formula; Cyclotomic Hecke algebra; Schur--Weyl reciprocity; Supersymmetric function; Quantum superalgebra.}
\vspace*{-3mm}
\begin{abstract}We prove a super Frobenius formula for the characters of the cyclotomic Hecke algebras by applying the super Schur-Weyl reciprocity between the quantum superalgebras and cyclotomic Hecke algebras, which is a super analogue of the Frobenius formula in  (T. Shoji, J. Algebra \textbf{226}: 2000, 818--856) and a cyclotomic analogue of the super Frobenius formula in (H. Mitsuhashi, Linear Multilinear Algebra \textbf{58}: 2010, 941--955).
\end{abstract}
\maketitle
\vspace*{-5mm}
\section{Introduction}
In \cite{Frobenius}, Frobenius gave a formula of computing the characters of the symmetric group, which is often referred as the Frobenius formula. In his study of representations of the general linear group, Schur \cite{Schur-d,Schur} showed the Frobenius formula can be obtained by the classical Schur-Weyl reciprocity. After Schur's classical work, Schur--Weyl reciprocity has been extended to various groups and algebras. Among them,  two  remarkable extensions for us are the super type extensions \cite{B-Regev,Serg} and the (cyclotomic) quantum type extension \cite{Jimbo,ATY,Hu,Moon,Mit}. In \cite{Zhao18}, we establish a super Schur-Weyl reciprocity between the cyclotomic Hecke algebra $\mathcal{H}$ and the quantum superalgebra $U_q(\fg)$.

Based on the Schur-Weyl reciprocity between the Iwahori-Hecke algebras of type $A$ and the quantum enveloping algebra of $\fgl(n)$ given by Jimbo \cite{Jimbo}, Ram \cite{Ram} gave a $q$-analogue of Frobenius formula for the characters of the Iwahori-Hecke algebras of type $A$. A super Frobenius formula for the characters of the Iwahori-Hecke algebras of type $A$ was given by Mitsuhashi in \cite{M2010} by virtue of the Schur-Weyl reciprocity between the Iwahori-Hecke algebras of type $A$ and the quantum superalgebra \cite{Moon,Mit}. An extension of Frobenius formula for the characters of cyclotomic Hecke algebra of type $G(m,1,n)$ is found in \cite{S} by applying the Schur-Weyl reciprocity between cyclotomic Hecke algebras and quantum algebras given in \cite{SS}.

 Motivated by these works, we prove a super Frobenius formulas for the characters of the cyclotomic Hecke algebras by applying the super Schur-Weyl reciprocity mentioned above. More precisely, let $\mpn$ be the set of multipartitions of $n$ and  let $g(\bmu)\in \mathcal{H}$ be the standard element of type $\bmu=(\mu^{(1)};\mu^{(2)};\ldots;\mu^{(m)})\in\mpn$ (see \ref{Equ:T-bmu}). It is known by Ariki and Koike \cite{AK} that the irreducible representations of $\mathcal{H}$ are indexed by $\mpn$. We denote by $S^{\bgl}$ the irreducible $\mathcal{H}$-module corresponding to $\bgl\in\mpn$ and by $\chi^{\bgl}$ its irreducible character.  Then the characters $\chi^{\bgl}$ of $\mathcal{H}$ are completely determined by their values on the standard element $g(\bmu)$  of type $\bmu$ for all $\bmu\in\mpn$ (see \cite[Proposition~7.5]{S}). For simplicity, we write $\chi^{\bgl}(\bmu)=\chi^{\bgl}(g(\bmu)))$. The super Frobenius formula (Theorem~\ref{Them:Frobenius}) states that \begin{equation*}
   q_{\bmu}(\bx/\by;q,\bq)=\sum_{\bgl\in\mpn}\chi^{\bgl}(\bmu)S_{\bgl}(\bx/\by),
 \end{equation*}
where $S_{\bgl}(\bx/\by)$ is the supersymmetric Schur function
associated to a multipartition $\bgl$ and $q_{\bmu}(\bx/\by;q,\bq)$ is a certain polynomial, which can be described by using super Hall-Littlewood functions $q_n(\bx/\by;t)$ (see Eq.~(\ref{Equ:Supersymm-multi}) and (\ref{Equ:q-bmu}) for the details). Let us remark that the super Frobenius formula enables us to derive a super Frobenius formula for the complex reflection group of type $G(m,1,n)$ by specializing $q=1$ and $Q_i=\varsigma^i$, where $\varsigma$  is a fixed primitive $m$-th root of unity (see Remark~\ref{Remark:Super-Frobenius}).

 This paper is organized as follows. In Section~\ref{Sec:Quantum-superalgebras} we review briefly the definitions of quantum superalgebra and cyclotomic Hecke algebras, and  the super Schur-Weyl reciprocity.  Section~\ref{Sec:super-symm} devotes to introduce the supersymmetric Schur functions and power sum supersymmetric functions associated to multipartitions and the super Hall-Littlewood functions. In particular, we prove a super-analogue of \cite[Appendix B~(9.5)]{Macdonald} and \cite[Proposition~6.6]{S} (see Proposition~\ref{Prop:super-Schur-power}). The super Frobenius formula for the characters of cyclotomic Hecke algebras is proved in last section.

 Throughout the paper, we assume that  $\mathbb{K}=\mathbb{C}(q,\bq)$ is the field of rational function in indeterminates $q$ and  $\bq=(Q_1, \ldots, Q_m)$.
 For fixed non-negative $k,\ell$ with $k+\ell>0$, we define the parity function  $i\mapsto \overline{i}$ by \begin{equation*} \overline{i}=\left\{\begin{array}{ll}\overline{0}, &\hbox{ if }1\le i\leq k;\\\overline{1}, & \hbox{ if }k<i\leq k+\ell.\end{array}\right.\end{equation*}
Assume that $k_1, \ldots, k_m$, $\ell_1, \ldots, \ell_m$ are non-negative integers satisfying  $\sum_{i=1}^mk_i=k$, $\sum_{i=1}^m\ell_i=\ell$ and denote by $\bk=(k_1, \ldots, k_m)$, $\bl=(\ell_1, \ldots, \ell_m)$. For $i=1, \ldots, m$, we define $d_i=\sum_{j\leq i}k_j+\ell_j$.

\subsection*{Acknowledgements}Part of this work was carried out while the author was visiting the Chern Institute of Mathematics at the Nankai University. The authors would like to thank Professor Chengming Bai for his hospitality during his visit. The author is very grateful to the anonymous referee for his/her valuable comments and suggestions which greatly improved this paper.  The author was supported partly by the National Natural Science Foundation of China (Grant No. 11571341, 11671234, 11871107).
\section{Preliminaries}\label{Sec:Quantum-superalgebras}
In this section we recall the definitions of quantum superalgebra and cyclotomic Hecke algebras. The notations and some known facts which are need in the following sections are also established.

\begin{point}{}*
Recall that  the Lie superalgebra $\mathfrak{gl}(k|\ell)$ is the $(k+\ell)\times (k+\ell)$ matrices with $\mathbb{Z}_2$-gradings given by \begin{eqnarray*} \mathfrak{gl}(k|\ell)_{\bar 0}&=&\left\{\left.\left(\begin{array}{cc}\mathbf{A} & \mathbf{0} \\ \mathbf{0} & \mathbf{D}\end{array}\right)\right|\mathbf{A}=(a_{ij})_{1\leq i,j\leq k}, \mathbf{D}=(d_{ij})_{k< i,j\leq k+\ell}\right\},\\ \mathfrak{gl}(k|\ell)_{\bar 1}&=&\left\{\left.\left(\begin{array}{cc}\mathbf{0}& \mathbf{B} \\ \mathbf{C} & \mathbf{0}\end{array} \right)\right|\mathbf{B}=(b_{ij})_{1\leq i\leq k}^{k<j\leq k+\ell}, \mathbf{C}=(c_{ij})_{k<i\leq k+\ell}^{1\leq j\leq k}\right\}                                      \end{eqnarray*}
and Lie bracket product defined by $$[\mathbf{X},\mathbf{Y}]:=\mathbf{XY}-(-1)^{\overline{\mathbf{X}}\,\overline{\mathbf{Y}}}\mathbf{YX}$$ for homogeneous $\mathbf{X},\mathbf{Y}$.

For $a,b=1,\ldots, k+\ell$, we denote by  $\mathbf{E}_{a,b}$ the elementary $(k+\ell)\times (k+\ell)$ matrix with 1 in the $(a,b)$-entry and  zero in all other entries.  Let $\epsilon_i: \mathfrak{gl}(k|\ell)\rightarrow \mathbb{C}$ be the linear function on $\mathfrak{gl}(k|\ell)$ defined by \begin{equation*}
  \epsilon_i(\mathbf{E}_{a,b})=\delta_{i,a}\delta_{a,b} \text{ for }i, a,b\in [1, k+\ell].
\end{equation*}
The free abelian group $P=\bigoplus\limits_{i=1}^{k+\ell}\mathbb{Z}\epsilon_i$ (resp. $P^{\vee}=\bigoplus\limits_{i=1}^{k+\ell}\mathbb{Z}\mathbf{E}_{b,b}$) is called the \textit{weight lattice} (resp. \textit{dual weight lattice}) of $\mathfrak{gl}(k|\ell)$, and there is a symmetric bilinear form
$(\,,\,)$ on $\mathfrak{h}^*=\mathbb{C}\otimes_{\mathbb{Z}}P$ defined by
\begin{equation*}
  (\epsilon_i,\epsilon_j)=(-1)^{\overline{i}}\delta_{i,j} \text{ for }i,j\in [1, k+\ell].
\end{equation*}
Then the simple roots of $\mathfrak{gl}(k,\ell)$ are $\alpha_i=\epsilon_i-\epsilon_{i+1}$, $i=1, \ldots, k+\ell-1$.
We have positive root system $\Phi^{+}=\{\alpha_{i,j}=\epsilon_i-\epsilon_j|1\leq i<j\leq k+\ell\}$ and negative root system $\Phi^{-}=-\Phi^{+}$. Define $\overline{\alpha}_{i,j}=\overline{i}+\overline{j}$ and call $\alpha_{i,j}$ is an even (resp. odd) root if $\overline{\alpha}_{i,j}=\overline{0}$ (resp. $\overline{1}$). Note that $\alpha_{k}$ is the only odd simple root. Denote by $\langle\cdot,\cdot\rangle$ the natural pairing between $P$ and $P^{\vee}$. Then the simple coroot $\alpha^{\vee}_i$ corresponding to $\alpha_i$ is the unique element in $P^{\vee}$ satisfying \begin{equation*}\langle \alpha^{\vee}_i,\lambda\rangle=(-1)^{\overline{i}}(\alpha_i, \lambda)\text{ for all }\lambda\in P. \end{equation*}
\end{point}

\begin{definition}[\cite{Zhang}]The \textit{quantum superalgebra} $U_q(\mathfrak{gl}(k|\ell)$, i.e.,  \textit{quantized universal enveloping algebra} of $\fgl(k|\ell)$  is the unitary superalgebra over $\mathbb{K}$ generated by the homogeneous  elements
\begin{equation*}
  E_1, \ldots, E_{k+\ell-1}, F_1, \ldots, F_{k+\ell-1}, K_1^{\pm1}, \ldots, K_{k+\ell}^{\pm1}
\end{equation*}
with a $\mathbb{Z}_2$-gradation by letting $\overline{E}_k=\overline{F}_k=\overline{1}$,
 $\overline{E}_a=\overline{F}_a=\overline{0}$ for $a\neq k$, and $\overline{{K_i}^{\pm1}}=\overline{0}$. These generators satisfy the following relations:
\begin{enumerate}
  \item[(Q1)] $K_aK_b=K_bK_a, K_aK_a^{-1}=K_a^{-1}K_a=1$;
  \item[(Q2)] $K_aE_b=q^{\la\alpha^{\vee}_a,\alpha_b \ra}E_bK_a$;
\item[(Q3)] $E_aE_b=E_bE_a, F_aF_b=F_bF_a$  if $|a-b|>1$;
\item[(Q4)]$[E_a,F_b]=\delta_{a,b}\frac{\widetilde{K}_a-\widetilde{K}_a^{-1}}{q_a-q_a^{-1}}$, where $q_a=q^{(-1)^{\overline{a}}}$ and $\widetilde{K}_a=K_aK^{-1}_{a+1}$;
\item[(Q5)] For $a\neq k$ and $|a-b|>1$,
\begin{eqnarray*}
 && E_a^2E_b-(q_a+q_a^{-1})E_aE_bE_a+E_bE_a^2=0,\\
&&F_a^2F_b-(q_a+q_a^{-1})F_aF_bF_a+F_bF_a^2=0;
\end{eqnarray*}
\item[(Q6)] $E_k^2=F^2_k=0$,\\
$E_k\!\left(\!E_{k\!-\!1}E_kE_{k\!+\!1}\!\!+\!\!E_{k\!+\!1}E_{k}E_{k\!-\!1}\!\right)\!\!-
\!\!\left(\!q\!\!+\!\!q^{-\!1}\!\right)\!E_kE_{k\!-\!1}E_{k\!+\!1}E_k
\!\!+\!\!\left(\!E_{k\!-\!1}E_kE_{k\!+\!1}\!\!+\!\!E_{k\!+\!1}E_kE_{k\!-\!1}\!\right)\!E_k$,\\
$F_k\!\left(\!F_{k\!-\!1}F_kF_{k\!+\!1}\!\!+\!\!F_{k\!+\!1}F_{k}F_{k\!-\!1}\!\right)\!\!-
\!\!\left(\!q\!\!+\!\!q^{-\!1}\!\right)\!F_kF_{k\!-\!1}F_{k\!+\!1}F_k
\!\!+\!\!\left(\!F_{k\!-\!1}F_kF_{k\!+\!1}\!\!+\!\!F_{k\!+\!1}F_kF_{k\!-\!1}\!\right)\!F_k$.
\end{enumerate}
  \end{definition}
It is known that $U_q(\mathfrak{gl}(k|\ell))$  is a Hopf superalgebra with comultiplication $\Delta$ defined by
\begin{align*}
  &\Delta(K_i^{\pm1})=K_i^{\pm 1}\otimes K_i^{\pm 1},\\
 &\Delta(E_i)=E_i\otimes \widetilde{K}_i+1\otimes E_i, \\
& \Delta(F_i)=F_i\otimes 1+ \widetilde{K}_i^{-1}\otimes F_i.
  \end{align*}
\begin{point}{}*
Let $V$ be a superspace over $\mathbb{K}$ with $\dim V= k|\ell$, that is, $V=\mathbb{C}^{k|\ell}\otimes_{\mathbb{C}}\mathbb{K}$, and let $\mathfrak{B}=\{v_1,\ldots, v_{k+\ell}\}$ be its homogeneous basis with $\bar{v}_i=\bar{i}$.  Then the \textit{vector representation} $\Psi$ of $U_q(\mathfrak{gl}(k|\ell))$ on $V$ is defined by
\begin{eqnarray*}
  &&\Psi(E_i)v_{j}=\left\{
                     \begin{array}{ll}
                       (-1)^{\overline{v}_j}v_{j-1}, &\quad \hbox{ if }j=i+1; \\
                       0, & \quad\hbox{ others.}
                     \end{array}
                   \right.;\\
&&\Psi(F_i)v_{j}=\left\{
                     \begin{array}{lll}
                       (-1)^{\overline{v}_j}v_{j+1}, &\quad \hbox{ if }j=i; \\
                       0, & \quad\hbox{ others.}
                     \end{array}
                   \right.\\
&&\Psi(K_i^{\pm 1})(v_j)=\left\{
                     \begin{array}{ll}
                       (-1)^{\overline{v}_j}q^{\pm1}v_{j}, & \quad\hbox{ if }j=i; \\
                       0, & \quad\hbox{ others.}
                     \end{array}
                   \right.
\end{eqnarray*}

For a positive integer $n$, we can define inductively a superalgebra homomorphism
\begin{equation*}
  \Delta^{(n)}: U_q(\mathfrak{gl}(k|\ell))\rightarrow U_q(\mathfrak{gl}(k|\ell))^{\otimes n},\quad \Delta^{(n)}=(\Delta^{(n-1)}\otimes \mathrm{id})\circ \Delta
\end{equation*}
 for each $n\geq 3$, where $\Delta^{(2)}=\Delta$. Therefore, $\Psi$ can be extended to the representation on tensor space $V^{\otimes n}$ via the Hopf superalgebra structure of $U_q(\mathfrak{gl}(k|\ell))$ for each $n$, we denote it by $\Psi^{\otimes n}$.
According to \cite[Proposition~3.1]{BKK}, $(\Psi, V)$ is an irreducible highest weight module $V(\epsilon_1)$ with highest weight $\epsilon_1$ and $V^{\otimes n}$ is complete reducible for all $n$.
\end{point}

\begin{point}{}* Now suppose that $V=V^{(1)}\oplus\cdots\oplus V^{(m)}$, where $V^{(i)}$ is a subsuperspace of $V$ with $\dim V^{(i)}=k_i|\ell_i$, and assume that
\begin{equation*}
  \mathfrak{B}^{(i)}=\left\{v^{(i)}_1, \ldots, v_{k_i+\ell_i}^{(i)}\right\}
\end{equation*}
is a basis of $V^{(i)}$ such that $v_{a}^{(i)}$ (resp. $v_{b}^{(i)}$) being of degree $\bar{0}$ (resp. $\bar{1}$) for $1\leq a\leq k_i$ (resp. $k_i<b\leq k_i+\ell_i$) for each $i$ such that $1\leq i\leq m$.
We say that the vectors in $\mathfrak{B}^{(i)}$ are of color $i$ and we linearly order the vectors $v_1^{(1)}, \ldots, v^{(m)}_{k_m+\ell_m}$ by the rule\begin{eqnarray*}
  v_{a}^{(i)}<v_{b}^{(j)}&\text{ if and only if }& i<j\text{ or }i=j\text{ and }a<b.                                                          \end{eqnarray*}We may and will identify $v_1^{(1)}$, $\ldots$, $v^{(m)}_{k_m+\ell_m}$ with $v_1$, $\cdots$, $v_{k+\ell}$ as follows:
\begin{equation*}\label{Equ:Basis-index}
  \begin{array}{ccccccccccccc}
  v_{1}^{(1)}& \cdots & v_{k_1}^{(1)}& v_{k_1\negmedspace+\negmedspace1}^{(1)}&\cdots& v_{k_1\negmedspace+\negmedspace\ell_1}^{(1)}&\cdots&
  v_{1}^{(m)}& \cdots & v_{k_m}^{(m)}& v_{k_m+1}^{(m)}&\cdots& v_{k_m+\ell_m}^{(m)} \\
  \updownarrow& \vdots & \updownarrow & \updownarrow & \vdots & \updownarrow&\vdots&\updownarrow&\vdots&\updownarrow&\updownarrow&\vdots&\updownarrow \\
  v_1& \cdots & v_{k_1} & v_{k+1} & \cdots& v_{k\negmedspace+\negmedspace\ell_1}&\cdots&v_{k\negmedspace-\negmedspace k_m\negmedspace+\negmedspace1}&\cdots&v_k&v_{d_m\negmedspace-\negmedspace\ell_m\negmedspace+
  \negmedspace1}&\cdots&v_{k+\ell},
\end{array}
\end{equation*}
that is, $\mathfrak{B}=\mathfrak{B}^{(1)}\sqcup\cdots\sqcup\mathfrak{B}^{(m)}$.

Let $\mathscr{I}(n;k|\ell)=\{\bi=(i_1, \ldots, i_n)|1\leq i_t\leq k+\ell, 1\leq t\leq n\}$.
For $\bi=(i_1, \ldots, i_n)\in \mathscr{I}(n;k|\ell)$, we write $v_{\bi}=v_{i_1}\otimes \cdots\otimes v_{i_n}$ and put $c_a(v_{\bi})=b$ if $v_{i_a}$ is of color $b$. Then $\mathfrak{B}^{\otimes n}=\{v_{\bi}|\bi\in \mathscr{I}(n;k|\ell)\}$ is a basis of $V^{\otimes n}$. From now on, we will identify $\mathfrak{B}^{\otimes n}$ with $\mathscr{I}(n;k|\ell)$, that is, we will write $v_{\bi}$ by $\bi$, $\overline{v}_i$ by $\overline{i}$, $c_a(v_{\bi})$ by $c_a(\bi)$, etc., for $\bi\in \mathscr{I}(n;k|\ell)$, and use these notations freely.

Clearly, the Lie superalgebra $\fgl(k_i|\ell_i)$ can be viewed as a subalgebra of $\fgl(k|\ell)$ for each $i$ such that $1\leq i\leq m$. Therefore the Lie superalgebra $\mathfrak{g}=\mathfrak{gl}(k_1|\ell_1)\oplus\cdots\oplus\mathfrak{gl}(k_m|\ell_m)$ can be viewed as a subalgebra of $\fgl(k|\ell)$ and its quantum superalgebra $U_q(\mathfrak{g})$ can be naturally embedded in  $U_q(\mathfrak{gl}(k,\ell))$ as a $\mathbb{K}$-subalgebra generated by
\begin{equation*}\label{Equ:Uq(g)-generators}
 \mathscr{G}=\left\{ E_a, F_a, K_{b}^{\pm1}\mid a\in \{1,2, \ldots, d_m\}\backslash\{d_1, d_2, \ldots,d_m\}, 1\leq b\leq d_m\right\}.
\end{equation*}
Hence $U_q(\mathfrak{g})$ acts on $V^{\otimes n}$ by the restriction of $\Psi^{\otimes n}$, we also denote the action by $\Psi^{\otimes n}$.
\end{point}

\begin{point}{}* Recall that a composition (resp. partition) $\gl=(\gl_1, \gl_2, \ldots)$ of $n$, denote $\gl\models n$ (resp. $\gl\vdash n$) is a sequence (resp. weakly decreasing sequence) of  nonnegative integers such that $|\gl|=\sum_{i\geq1}\gl_i=n$ and we write $\ell(\gl)$ the {\it length} of $\gl$, i.e. the number of nonzero parts of $\gl$. A {\it multicomposition} (resp. {\it multipartition}) of $n$ is an ordered tuple $\bgl=(\gl^{(1)}; \ldots; \gl^{(m)})$ of compositions (resp. partitions) $\lambda^{i}$ such that $n=\sum_{i=1}^m|\lambda^{i}|$. We denote by $\mpn$ the set of all multipartitions of $n$ and by $\mathscr{C}(n;m)$ the set of all multicompositions of $n$ with length not greater than $m$.

A partition $\gl=(\gl_1, \gl_2, \cdots)\vdash n$ is said to be a \textit{$(k, \ell)$-hook partition} of $n$ if $\gl_{k+1}\leq \ell$. We let  $H(k,\ell;n)$ denote the set of all $(k,\ell)$-hook partitions of $n$, that is
\begin{eqnarray*}\label{Def:hook}
H(k,\ell;n)=\{\gl=(\gl_1,\gl_2,\cdots)\vdash n\mid \gl_{k+1}\le \ell\}.
\end{eqnarray*}
A multipartition $\bgl=(\gl^{(1)}; \ldots; \gl^{(m)})$ of $n$ is said to be a \textit{$(\bk,\bl)$-hook multipartition} of $n$ if $\gl^{(i)}$ is a $(k_i,\ell_i)$-hook partition for all $i=1, \ldots,m$. We denote by $H(\bk|\bl; m,n)$ the set of all  $(\bk,\bl)$-hook multipartition of $n$.

It is known that  the irreducible representations of $U_q(\mathfrak{gl}(k,\ell))$ occurring in $V^{\otimes n}$ are parameterized by the $(k,\ell)$-hook partitions of $n$. Since $U_q(\mathfrak{g})=U_q(\mathfrak{gl}(k_1,\ell_1))\otimes\cdots\otimes U_q(\mathfrak{gl}(k_m,\ell_m))$, irreducible representations of $U_q(\mathfrak{g})$ occurring in $V^{\otimes n}$ are parameterized by the $(\bk,\bl)$-hook multipartitions of $n$, that is,   the irreducible representations of $U_q(\mathfrak{g})$ occurring in $V^{\otimes n}$ are parameterized by $H(\bk|\bl;m,n)$.
\end{point}

\begin{point}{}*\label{subsec:G(m,1,n)}
 Let $W_{m,n}$ be the complex reflection group of type $G(m,1,n)$. According to \cite{shephard-toda}, $W_{m,n}$ is generated by $s_1, s_2, \dots, s_{n}$ with relations
  \begin{align*}&s_1^m=1,\quad s_2^2=\cdots=s_{n}^2=1,&&\\
  &s_1 s_2s_1 s_2=s_2s_1 s_2s_1,&&\\
 & s_is_j=s_js_i,&&\text{ if } |i-j|>1,\\
&s_is_{i+1}s_i=s_{i+1}s_{i}s_{i+1},&& \text{ for }2\leq i< n.\end{align*}
 It is well-known that $W_{m,n}\cong(\mathbb{Z}/m\bz)^{n}\rtimes \mathfrak{S}_{n}$, where $s_2, \dots, s_{n}$ are generators of the symmetric group $\mathfrak{S}_{n}$ of degree $n$ corresponding to transpositions $(1\,2)$, $\ldots$, $(n\!-\!1\,n)$. For $a=1,\ldots, n$,  let $t_a=s_{a}\cdots s_2s_1s_2\cdots s_{a}$. Then $t_1, \cdots, t_n$ are generators of $(\mathbb{Z}/m\bz)^{n}$ and any element $w\in W_{m,n}$ can be written in a unique way as $w=t_1^{c_1}\cdots t_n^{c_n}\sigma$, where $\sigma\in \mathfrak{S}_{n}$ and $c_i$ are integers such that $0\leq c_i<m$.
 \end{point}

\begin{point}{}*
The \textit{Ariki-Koike algebra} \cite{AK}, i.e., the \textit{cyclotomic Hecke algebra} $\mathcal{H}=H_{m,n}(q,\mathbf{Q})$ associated to $W_{m,n}$ is the unital
associative $\mathbb{K}$-algebra  generated by
$g_1,g_2,\dots,g_{n}$ and subject to relations
\begin{align*}&(g_1-Q_1)\dots(g_1-Q_m)=0,&&\\
&g_1g_2g_1g_2=g_2g_1g_2g_1,&&\\
&g_i^2=(q-q^{-1})g_i+1, &&\text{ for }2\leq i\leq n,\\
&g_ig_j=g_jg_i, &&\text{ for }|i-j|>2,\\
 &g_ig_{i+1}g_i=g_{i+1}g_{i}g_{i+1}, &&\text{ for }2\leq i<n.\end{align*}
If $s_{i_1}s_{i_2}\cdots s_{i_k}$ is a reduced expression for $\sigma\in \mathfrak{S}_n$. Then $g_{\sigma}:=g_{i_1}g_{i_2}\cdots g_{i_k}$ is independent of the choice of reduced expression and  $\{g_{\sigma}|\sigma\in \mathfrak{S}_n\}$ is a linear basis of the subalgebra $H_n(q)$ of $\mathcal{H}$ generated by $g_2, \ldots, g_{n}$.
\end{point}

\begin{point}{}*
For $s_a=(a-1,a)\in \fS_n$ and
$\bi=(i_1, \ldots, i_{a-1},i_{a},\ldots, i_n)$, we define the following right action
\begin{equation*}\label{Equ:is_a}
  \bi s_a:=(i_1, \ldots, i_{a-2}, i_{a}, i_{a-1}, i_{a+1}, \ldots, i_n).
\end{equation*}
Following Sergeev \cite{Serg} and Berele-Regev \cite{B-Regev}, we define a right action $\phi:\mathbb{C}\mathfrak{S}_n\rightarrow  \mathrm{End}_{\mathbb{K}}(V^{\otimes n})$ by
\begin{eqnarray*}\label{Equ:W_mn-sign}
 \phi(s_a)(\bi)&:=&\left\{\begin{array}{ll}\vspace{1\jot}
 (-1)^{\overline{i}_{a-1}}\bi,& \text{if }i_{a-1}=i_{a};\\
(-1)^{\overline{i}_{a-1}\overline{i}_{a}}\bi s_a,& \text{if }i_{a-1}\neq i_{a}.
                         \end{array}\right.
\end{eqnarray*}
For $a=2, \ldots, n$, we define the endomorphisms $T_a, S_a\in \mathrm{End}_\mathbb{K}(V^{\otimes n})$ as follows: \begin{eqnarray}\label{Equ:Ta-action}
 && T_a(\bi):=\left\{\begin{array}{ll}\vspace{2\jot}(q-q^{-1})\bi
 +\phi(s_a)(\bi),& \text{if }i_{a-1}<i_{a};\\ \vspace{2\jot}
 \frac{q-q^{-1}}{2}\bi+\frac{q+q^{-1}}{2}\phi(s_a)(\bi),& \text{if }i_{a-1}=i_{a}; \\
  \phi(s_a)(\bi),&\text{if } i_{a-1}>i_{a}.
                         \end{array}\right.\end{eqnarray}
\begin{eqnarray*}&&S_a(\bi):=\left\{\begin{array}{ll}
 T_a(\bi), & \hbox{ if } c_{a-1}(\bi)= c_{a}(\bi);\\
 \phi(s_a)(\bi), & \hbox{ if } c_{a-1}(\bi)\neq c_{a}(\bi);\end{array}\right.
\end{eqnarray*}
and let $\Omega_j(\bi):=Q_{c_j(\bi)}\bi$ for $j=1, \ldots, n$. Finally, we define $T_1\in \mathrm{End}_\mathbb{K}(V^{\otimes n})$ by letting
\begin{eqnarray*}\label{Equ:T_0-action}
T_1(\bi):&=&T_{2}^{-1}\cdots T_{n}^{-1}S_n\cdots S_2\Omega_1(\bi).
\end{eqnarray*}
\end{point}

It is shown in \cite[Theorem~4.12]{Zhao18} that $\Phi: g_i\mapsto T_i$ ($1\leq i\leq n$) is an (super) representation of $\mathcal{H}$ on $V^{\otimes n}$. Furthermore, the following Schur-Weyl reciprocity  between $U_q(\fg)$ and $\mathcal{H}$ holds.

\begin{theorem}[\protect{\cite[Theorem~5.13]{Zhao18}}]\label{Them:Schur-Weyl}The $\Psi^{\otimes n}(U_q(\mathfrak{g}))$ and $\Phi(\mathcal{H})$ are mutually the fully centralizer algebras of each other, i.e.,
  \begin{eqnarray*}
      \Psi^{\otimes n}(U_q(\mathfrak{g}))=\mathrm{End}_{\mathcal{H}}(V^{\otimes n}), &\qquad& \Phi(\mathcal{H})=\mathrm{End}_{U_q(\mathfrak{g})}(V^{\otimes n}).
  \end{eqnarray*}
More precisely,   there is a $U_q(\fg)\text{-}\mathcal{H}$-bimodule isomorphism
\begin{equation*}\label{Equ:V^n-decom}
  V^{\otimes n}\cong \bigoplus_{\bgl\in H(\bk|\bl;m,n)}V_{\bgl} \otimes S^{\bgl},
\end{equation*}
 where $V_{\bgl}$ is the irreducible $U_q(\fg)$-module indexed by $\bgl$.
\end{theorem}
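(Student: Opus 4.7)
The plan is to prove Theorem~\ref{Them:Schur-Weyl} via the standard double-centralizer strategy in a semisimple setting, combining commutation of the two actions with an explicit color-decomposition of $V^{\otimes n}$ that reduces the problem to the known super-type-$A$ duality. First I would verify that $\Phi(\mathcal{H})$ centralizes $\Psi^{\otimes n}(U_q(\fg))$. For $a \geq 2$, $T_a$ acts nontrivially only on the $(a-1)$-th and $a$-th tensor slots by a super $R$-matrix on $V\otimes V$, and its commutation with the coproduct action on the vector representation is the Jimbo--Moon--Mitsuhashi calculation already established for $U_q(\fgl(k|\ell))$; it holds a fortiori for the subalgebra $U_q(\fg)$. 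For $T_1$, the key observation is that $\Omega_1$ acts as the scalar $Q_i$ on the color-$i$ part of the first tensor slot, and since $U_q(\fg) = \bigotimes_i U_q(\fgl(k_i|\ell_i))$ preserves the color decomposition $V = \bigoplus_i V^{(i)}$, the $\Omega_1$-factor commutes with $U_q(\fg)$; the conjugating string $T_2^{-1}\cdots T_n^{-1}S_n \cdots S_2$ also commutes with $U_q(\fg)$ by the $a \geq 2$ case and the elementary commutation of the $S_a$.

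Next I would decompose $V^{\otimes n} = \bigoplus_{\bnu \in \mathscr{C}(n;m)} V^{\otimes n}_{\bnu}$, where $V^{\otimes n}_{\bnu}$ denotes the span of basis vectors $\bi$ with exactly $\nu_j$ entries of color $j$. Each $V^{\otimes n}_{\bnu}$ is a $U_q(\fg)$-submodule, and after reordering the tensor slots by color it becomes isomorphic to $(V^{(1)})^{\otimes \nu_1} \otimes \cdots \otimes (V^{(m)})^{\otimes \nu_m}$, on which $U_q(\fg)$ acts factor-by-factor. Applying the known super Schur--Weyl duality between each $U_q(\fgl(k_j|\ell_j))$ and $H_{\nu_j}(q)$ (after Moon and Mitsuhashi) to each factor, and tensoring the results, one gets on every $V^{\otimes n}_{\bnu}$ a $U_q(\fg)\otimes H_{\bnu}(q)$-bimodule decomposition whose irreducible summands are indexed by $m$-tuples of $(k_j,\ell_j)$-hook partitions of $\nu_j$, where $H_{\bnu}(q) = H_{\nu_1}(q) \otimes \cdots \otimes H_{\nu_m}(q)$ is the parabolic Hecke subalgebra.

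Finally, I would assemble these local pieces into the global cyclotomic statement. The Jucys--Murphy-type image $\Phi(L_a) := T_a \cdots T_2 T_1 T_2^{-1} \cdots T_a^{-1}$ acts diagonally by $Q_{c_a(\bi)}$ on $\bi$, from which one deduces the cyclotomic relation on $T_1$ and identifies the subalgebra generated by $T_1$ and the type-$A$ Hecke generators with the full image of $\mathcal{H}$. Consequently the local decompositions assemble into
\begin{equation*}
  V^{\otimes n} \cong \bigoplus_{\bgl \in H(\bk|\bl; m,n)} V_{\bgl} \otimes S^{\bgl}
\end{equation*}
as $U_q(\fg)\text{-}\mathcal{H}$-bimodules, and both centralizer equalities follow by the double centralizer theorem for semisimple algebras (valid here since $\mathbb{K} = \mathbb{C}(q,\bq)$ is generic). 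The main obstacle is verifying that the globally defined $T_1$ genuinely satisfies the cyclotomic relation $(T_1 - Q_1)\cdots(T_1 - Q_m) = 0$ together with the mixed braid relation $T_1 T_2 T_1 T_2 = T_2 T_1 T_2 T_1$ on $V^{\otimes n}$: the conjugating string and the parities $(-1)^{\overline{i}_a \overline{i}_b}$ built into $\phi(s_a)$ interact nontrivially with the color-indexed scalars $Q_{c_a(\bi)}$, and careful sign-tracking is needed to ensure the relations hold on every basis vector $\bi$ regardless of color pattern.
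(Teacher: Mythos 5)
The paper does not actually prove this statement: it is imported wholesale from \cite[Theorem~5.13]{Zhao18}, so there is no in-paper argument to match your proposal against. Measured against the standard Sakamoto--Shoji/Hu-type proof that such theorems receive (and that \cite{Zhao18} follows), your top-level plan --- commutation of the two actions, decomposition by color content, reduction to the Moon--Mitsuhashi super duality for each $U_q(\mathfrak{gl}(k_j|\ell_j))$, then a double-centralizer argument over the generic field --- has the right shape, and the commutant check in your first step is fine as a sketch.

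The gaps are in your third step. First, the claim that $T_a\cdots T_2T_1T_2^{-1}\cdots T_a^{-1}$ acts diagonally by $Q_{c_a(\bi)}$ is false. Already for $n=2$: take $\bi=(i_1,i_2)$ with $i_1>i_2$, where $i_1$ has color $2$ and $i_2$ has color $1$; then $T_2T_1T_2^{-1}(\bi)=S_2\Omega_1T_2^{-1}(\bi)=Q_1\,\bi-(q-q^{-1})(-1)^{\overline{i}_1\overline{i}_2}Q_2\,(i_2,i_1)$, which is only triangular, not diagonal. The genuinely diagonal operators are the $\Omega_a=\widetilde{\Phi}(\xi_a)$, and Shoji's relation $g_j\xi_j=\xi_{j-1}g_j+\Delta^{-2}\sum_{a<b}(Q_a-Q_b)(q-q^{-1})F_a(\xi_{j-1})F_b(\xi_j)$ says precisely that the $\xi_a$ are \emph{not} conjugates of $g_1$, so your identification of the $\Omega$-action with Jucys--Murphy conjugates cannot be the mechanism. (Triangularity would still yield the cyclotomic relation for $T_1$, but that relation is already part of the quoted \cite[Theorem~4.12]{Zhao18} and is not the content of the present theorem.) Second, and more seriously, the actual content of the theorem --- that $\Phi(\mathcal{H})$ is the \emph{full} centralizer, equivalently that the multiplicity space of each $V_{\bgl}$ is the irreducible Ariki--Koike module $S^{\bgl}$ and that distinct $\bgl\in H(\bk|\bl;m,n)$ give pairwise non-isomorphic $S^{\bgl}$ --- is asserted, not proved. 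The double centralizer theorem only delivers $\Phi(\mathcal{H})=\mathrm{End}_{U_q(\fg)}(V^{\otimes n})$ \emph{after} one has the bimodule decomposition with irreducible, pairwise non-isomorphic $\mathcal{H}$-constituents. Your local-to-global step skips exactly this: $V^{\otimes n}_{\bnu}$ is not a single ordered tensor product but the sum over all color patterns of content $\bnu$ (an induced module from the parabolic $H_{\bnu}(q)$ together with the $\Omega$-eigenvalue data), and one must show these induced modules are exactly the irreducibles $S^{\bgl}$ for $\bgl\in H(\bk|\bl;m,n)$, using the Ariki--Koike classification and semisimplicity at generic parameters, or else a dimension count comparing $\sum_{\bgl}(\dim V_{\bgl})(\dim S^{\bgl})$ and $\sum_{\bgl}(\dim S^{\bgl})^2$ with $\dim V^{\otimes n}$ and $\dim\mathrm{End}_{U_q(\fg)}(V^{\otimes n})$. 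Without that, ``the local decompositions assemble'' is where the theorem is being assumed rather than proved.
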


\begin{point}{}*
Let $\Delta$ be the determinant of the Vandermonde matrix $V(\mathbf{Q})$ of degree $m$ with $(a,b)$-entry $Q_b^a$ for $1\leq b\leq m$, $0\leq a<m$. Clearly, we can write $V(\mathbf{Q})^{-1}=\Delta^{-1}V^*(\mathbf{Q})$, where $V^*(\mathbf{Q})=(v_{ba}(\mathbf{Q}))$ and $v_{ba}(\mathbf{Q})$ is a polynomial in $\mathbb{Z}[\mathbf{Q}]$.

For $1\leq c\leq m$, we define a polynomial $F_c(X)$ with a variable $X$ with coefficients in $\mathbb{Z}[\mathbf{Q}]$ by
\begin{equation*}
  F_c(X)=\sum_{0\leq i<m}v_{ci}(\mathbf{Q})X^i.
\end{equation*}

Following \cite[\S 3.6]{S}, let $\mathcal{H}^{\natural}$ be the associative algebra over $\mathbb{K}$ generated by $g_2, \ldots, g_{n}$ and $\xi_1, \ldots, \xi_n$ subject to the following relations:
\begin{align*}
                       & (g_i-q)(g_i+q^{-1})=0,  && 2\leq i\leq n,\\
          &(\xi_i-Q_1)\cdots(\xi_i-Q_m)=0, && 1\leq i\leq n, \\
           &g_ig_{i+1}g_{i}=g_{i+1}g_ig_{i+1},&&  2\leq i< n,\\
            &g_ig_j=g_jg_i,&& |i-j|\geq 2, \\
         & \xi_i\xi_j=\xi_j\xi_i,&& 1\leq i,j\leq n, \\
         &g_j\xi_i=\xi_ig_j,&&  i\neq j-1, j,\\
          &g_j\xi_{j}=\xi_{j-1}g_j+\Delta^{-2}\sum_{a<b}(Q_{a}-Q_{b})(q-q^{-1})F_{a}(\xi_{j-1})F_{b}(\xi_j),& \\
 &g_j\xi_{j-1}=\xi_{j}g_j-\Delta^{-2}\sum_{a<b}(Q_{a}-Q_{b})(q-q^{-1})F_{a}(\xi_{j-1})F_{b}(\xi_j).&
            \end{align*}
Then $\mathcal{H}^{\natural}$ is isomorphic to $\mathcal{H}$ owing to \cite[Theorem~3.7]{S}. Moreover, the linear map  $\widetilde{\Phi}: \mathcal{H}^{\natural}\rightarrow \mathrm{End}(V^{\otimes n})$ defined by $g_i\mapsto T_i$, $\xi_j\mapsto \Omega_j$ for $2\leq i\leq n$, $1\leq j\leq n$,  is a (super) representation of $\mathcal{H}^{\natural}$, which is isomorphic to the (super) representation $(\Phi, V^{\otimes n})$ of $\mathcal{H}$.
\end{point}

\begin{remark} Let $\phi: \mathbb{K}\rightarrow \mathbb{C}$ be the specialization homomorphism defined by $\phi(q)=1$ and $\phi(Q_i)=\varsigma^i$ for each $i$, where $\varsigma$ is a fixed primitive $m$-th root of unity. By the specialization homomorphism $\phi$, one obtains $\mathbb{C}\otimes_{\mathbb{K}}\mathcal{H}\backsimeq \mathbb{C}W_{m,n}$ and $\mathbb{C}\otimes_{\mathbb{K}}V^{\otimes n}\simeq \overline{V}^{\otimes n}$ where $\overline{V}=\oplus_{i=1}^m\mathbb{C}^{k_i|\ell_i}$. Moreover,  we yield the Schur-Weyl duality between $U(\fg)$ and $\mathbb{C}W_{m,n}$ by applying the specialization.
\end{remark}
\section{Supersymmetric functions}\label{Sec:super-symm}

In this section we introduce the supersymmetric Schur functions and power sum supersymmetric functions indexed by multipartitions. We will follow \cite{Macdonald} with respect to our notation about symmetric functions unless otherwise stated.

\begin{point}{}*
 Let $\mathbf{x}, \mathbf{y}$ be sets of $k, \ell$ indeterminates respectively as follows
\begin{eqnarray*}
  \mathbf{x}^{(i)}&=&\left\{x^{(i)}_1, \ldots, x_{k_i}^{(i)}\right\},\quad 1\leq i\leq m,\\
  \mathbf{y}^{(i)}&=&\left\{y^{(i)}_1, \ldots, y_{\ell_i}^{(i)}\right\},\quad 1\leq i\leq m,\\
    \mathbf{x}&=&\mathbf{x}^{(1)}\cup\cdots\cup \mathbf{x}^{(m)},\\
    \mathbf{y}&=&\mathbf{y}^{(1)}\cup\cdots\cup \mathbf{y}^{(m)}.
\end{eqnarray*}
We linearly order the indeterminates $x_1^{(1)}, \ldots, x^{(m)}_{k_m}$, $y_1^{(1)}, \ldots, y^{(m)}_{\ell_m}$  by the rule\begin{eqnarray*}  x_{a}^{(i)}<x_{b}^{(j)}&\text{ if and only if }& i<j\text{ or }i=j\text{ and }a<b,\\ y_{a}^{(i)}<y_{b}^{(j)}&\text{ if and only if }& i<j\text{ or }i=j\text{ and }a<b.                                                         \end{eqnarray*}
We identify the indeterminates  $x_1^{(1)}$, $\ldots$, $y^{(m)}_{\ell_m}$ with the indeterminates $x_1$, $\ldots$, $x_{k}, y_1, \ldots, y_{\ell}$ as follows:
\begin{eqnarray*}\label{Equ:Basis-index}
  &\begin{array}{cccccc}
  x_{1}^{(1)}& \cdots & x_{k_1}^{(1)} &x_{1}^{(2)} & \cdots&x_{k_m}^{(m)} \\
  \updownarrow& \vdots & \updownarrow & \updownarrow  & \vdots&\updownarrow \\
  x_1& \cdots & x_{k_1} & x_{k_1+1} &  \cdots&x_{k},
\end{array}\quad&\quad\begin{array}{cccccc}
  y_{1}^{(1)}& \cdots & y_{\ell_1}^{(1)} &y_{1}^{(2)}&  \cdots&y_{\ell_m}^{(m)} \\
  \updownarrow& \vdots & \updownarrow & \updownarrow  & \vdots&\updownarrow \\
  y_1& \cdots & y_{\ell_1} & y_{\ell_1+1} &  \cdots&y_{\ell}.
\end{array}
\end{eqnarray*}
\end{point}

\begin{point}{}*
Let $\Lambda_{k,\mathbb{Z}}=\mathbb{Z}[x_1, \ldots, x_k]^{\mathfrak{S}_k}$ be the ring of symmetric functions of $k$ variables and $\Lambda_k=\Lambda_m\otimes_{\mathbb{Z}}\mathbb{Q}$. Then $\Lambda_k$ is graded with respect to the degree of these polynomials. That is $\Lambda_k=\bigoplus_{n}\Lambda_k^n$ where $\Lambda_k^n$ is the vector space of homogeneous symmetric polynomials of total degree $n$.

The \textit{power sum symmetric functions} \cite[$\mathrm{P}_{23}$]{Macdonald}
\begin{equation*}
  p_0(\bx)=1\text{ and } p_a(\bx)=\sum_{i=1}^{k}x_{i}^a\text{ for } a=1,2,\cdots
\end{equation*}
generate $\Lambda_{k}$ under the power series multiplication, whilst $\Lambda_k^n$ has a basis
\begin{equation*}
  p_{\lambda}(\bx):=p_{\lambda_1}(\bx)p_{\lambda_2}(\bx)\cdots \text{ with }\lambda\vdash n.
\end{equation*}

An alternative basis of $\Lambda_k^n$ is given by the \textit{Schur functions} $S_{\lambda}(\bx)$ with $\lambda\vdash n$, that is,
\begin{equation*}
  p_{\mu}(\bx)=\sum_{\lambda\vdash n}\chi^{\lambda}(\mu)S_{\lambda}(\bx) \quad\text { and }\quad
  S_{\lambda}(\bx)=\sum_{\mu\vdash n}Z_{\mu}^{-1}\chi^{\lambda}(\mu)p_{\mu}(\bx),
\end{equation*}
here $\chi^{\lambda}$ is the irreducible characters of $\mathbb{C}\mathfrak{S}_n$ corresponding to $\lambda\vdash n$, $\chi^{\lambda}(\mu)$ is the value of $\chi^{\mu}$ at elements of cycle type $\mu$ and $Z_{\mu}$ is the order of the centralizer in $\mathfrak{S}_n$ of an element of cycle type $\mu$.
Let us remark that $S_{\lambda}(\bx)=0$ is $\ell(\lambda)>k$ (see e.g. \cite[Equ.~(3.8)]{King}).

Now we denote by $\Lambda_{k,\ell}$ the ring of polynomials in $x_1, \ldots, x_k,y_1, \ldots, y_{\ell}$, which are separately symmetric in $\mathbf{x}$'s and $\mathbf{y}$'s, namely
\begin{equation*}
  \Lambda_{k,\ell}=\mathbb{Q}[x_1, \ldots, x_k]^{\mathfrak{S}_k}\otimes_{\mathbb{Q}}\mathbb{Q}[y_1, \ldots, y_\ell]^{\mathfrak{S}_\ell}.
\end{equation*}

Following  \cite[\S6]{B-Regev} or \cite[$\mathrm{P}_{280}$]{King}, the \textit{supersymmetric Schur function} $S_{{\lambda}^{(i)}}(\bx/\by)\in \Lambda_{k,\ell}$  corresponding to a partition $\lambda^{(i)}=\left(\lambda_1^{(i)}, \lambda_2^{(i)}, \ldots\right)$ is defined as
\begin{equation*}
  S_{\lambda^{(i)}}(\bx^{(i)}/\by^{(i)}):=\sum_{\mu\subset\lambda}(-1)^{|\lambda-\mu|}
  S_{\mu}(\bx^{(i)})S_{\lambda'/\mu'}(\by^{(i)}),
\end{equation*}
where $S_{\lambda'/\mu'}(\by^{(i)})$ is the skew Schur function associated to the conjugate $\lambda'/\mu'$  of the skew partition $\lambda/\mu$. Note that the skew Schur function  $S_{\eta/\theta}(\by^{(i)})$ can be calculated by $S_{\eta/\theta}(\by^{(i)})=\sum_{\nu}c^{\eta}_{\theta\nu}S_{\nu}(\by^{(i)})$, where the coefficients $c^{\eta}_{\theta\nu}$ are determined by the Littlewood-Richardson rule in the product of Schur functions (see \cite[$\mathrm{P}_{143}$]{Macdonald}). It should be noted that $S_{\lambda^{(i)}}(\bx/\by)=0$ unless $\lambda^{(i)}$ is a $(k,\ell)$-hook partition (see e.g. \cite[Equ.~(4.10)]{King}).

Suppose that  $\bgl=(\lambda^{(1)}; \ldots; \lambda^{(m)})$ is a multipartition of $n$. Then we define the supersymmetric Schur function associated with $\bgl$ by
\begin{eqnarray}\label{Equ:Supersymm-multi}
S_{\bgl}(\mathbf{x}/\mathbf{y})&=&\prod_{i=1}^{m}S_{{\lambda}^{(i)}}^{(i)}(\bx^{(i)}/\by^{(i)}),
\end{eqnarray}
which is a super analogue of  the Schur function associated to multipartitions defined in \cite[(6.1.2)]{S}.
\end{point}

\begin{point}{}* Now we give a combinatorial interpretation of the supersymmetric Schur functions in terms of $\bk|\bl$-semistandard tableau, which shows $S_{\bgl}(\bx/\by)=0$ unless $\bgl$ is $(\bk,\bl)$-hook multipartition of $n$.

Recall that for $\bgl\in \mpn$, the \textit{$\bk|\bl$-semistandard tableau} $\ft$ of shape $\bgl$ is a filling of boxes of $\bgl$ with variables  $\bx$, $\by$ such that for each $i$:\end{point}
\begin{enumerate}
  \item[(a)] The $i$-component $\ft^{(i)}$ of $\ft$ contains variables  $\bx^{(i)}, \by^{(i)}$ with shape $\lambda^{(i)}$, the $\bx$ part (the boxes filled with variables $\bx$ of $\ft^{(i)}$) is a tableau and the $\by$ part is a skew tableau;
  \item[(b)] The $\bx$ part is nondecreasing in rows, strictly increasing in columns;
  \item[(c)] The $\by$ part is nondecreasing in columns, strictly in creasing in rows.
\end{enumerate}
We denote by $\mathrm{std}_{\bk|\bl}(\bgl)$ the set  of $\bk|\bl$-semistandard tableaux of shape $\bgl$ and by  $s_{\bk|\bl}(\bgl)$ its cardinality. Clearly $\ft=(\ft^{(1)};\ft^{(2)}; \ldots; \ft^{(m)})$ is a $\bk|\bl$-semistandard tableau if and only if $\ft^{(i)}$ is a $(k_i,\ell_i)$-semistandard tableau in the sense of \cite[Definition~2.1]{B-Regev} for all $1\leq i\leq m$. Thanks to \cite[\S\,2]{B-Regev} and \cite[Lemma~4.2]{BKK}, $s_{\bk|\bl}(\bgl)\neq 0$ if and only if $\bgl\in H(\bk|\bl;m,n)$.

For a $\bk|\bl$-semistandard tableau $\ft$ of shape $\bgl$,  we define
\begin{equation*}
  \ft(\bx/\by)=\prod_{i=1}^{m}\prod_{(a,b)\in\lambda^{(i)}}z_{a,b}
\end{equation*}
where $z_{a,b}=x_{a,b}$ (resp. $-y_{a,b}$) whenever the box $(a,b)$ of $\ft^{(i)}$ is filled with symbol $x_{a,b}\in\bx$ (resp. $y_{a,b}\in \by$). It follows from \cite{B-Regev} and \cite[Equ.~(4.9)]{King} that for $\bgl\in H(\bk|\bl;m,n)$, we have
\begin{equation*}
  S_{\bgl}(\bx/\by)=\sum_{\ft\in\mathrm{std}_{\bk|\bl}(\bgl)}\ft(\bx/\by).
\end{equation*}

\begin{point}{}*\label{Point:Super-Hall} For positive integer $a$, let $q_{a}(\bx;t)$ be the Hall-Littlewood symmetric function for the variables $\bx=(x_1, \ldots, x_k)$ (cf. \cite[III, 2]{Macdonald}), which is defined by
\begin{eqnarray*}
  &&q_{0}(\bx;t)=1,\\
  &&q_a(\bx;t)=(1-t)\sum_{i=1}^kx_i^a\prod_{j\neq i}\frac{x_i-tx_j}{x_i-x_j}\qquad(a\geq 1).
\end{eqnarray*}

It is known that the generating function for the $q_a(\bx;t)$ is
\begin{equation*}
 Q(u)=\sum_{a\geq 0}q_a(\bx;t)u^{a}=\prod_{i=1}^k\frac{1-x_itu}{1-x_iu}.
\end{equation*}

Following \cite{M2010}, we define the \textit{super Hall-Liitlewood function} $q_a(\bx/\by;t)\in \Lambda_{k,\ell}(t)$ as follows:
\begin{equation*}
 Q_{\bx/\by}(u)=\sum_{a\geq 0}q_a(\bx/\by;t)u^{a}=\prod_{i=1}^k\frac{1-x_itu}{1-x_iu}
 \prod_{j=1}^{\ell}\frac{1-y_ju}{1-y_jtu}.
\end{equation*}

From the definition of $q_{a}(\bx/\by;t)$, we have (see \cite[(2.1)]{M2010})
\begin{eqnarray*}
  q_a(\bx/\by;t) &=&\sum_{i=0}^{a}q_{i}(\bx;t)q_{a-i}(t\by;t^{-1})\\
  &=&\sum_{i=0}^at^{a-i}q_i(\bx;t)q_{a-i}(\by;t^{-1}).
\end{eqnarray*}
\end{point}

\begin{point}{}*\label{subsec:standard-elements}Given a composition $\mathbf{c}=(c_1, \ldots, c_b)$ of $ n$, we denote by  $\mathfrak{S}_n^{(i)}$  the subgroup of $\mathfrak{S}_n$ generated by $s_j$ such that $a_{i-1}+1<j\leq a_i$, where $a_i=\sum_{j=1}^ic_j$ for $1\leq i\leq b$.  Then $\mathfrak{S}_n^{(i)}\backsimeq \mathfrak{S}_{c_i}$. Now we define a parabolic subgroup $\mathfrak{S}_{\mathbf{c}}$ of $\mathfrak{S}_n$ by
\begin{equation*}
 \mathfrak{S}_{\mathbf{c}}=\mathfrak{S}_n^{(1)}\times \mathfrak{S}_n^{(2)}\times\cdots\times \mathfrak{S}_n^{(b)}.
\end{equation*}In other words,  $\mathfrak{S}_{\mathbf{c}}$ is the Young subgroup of $\mathfrak{S}_n$ associated to $\mathbf{c}$.

Let $W_{\mathbf{c}}$ be the subgroup of $W_{m,n}$ obtained as the semidirect product of $\mathfrak{S}_{\mathbf{c}}$ with $(\mathbb{Z}/m\bz)^{n}$. Then $W_{\mathbf{c}}$ can be written as
\begin{equation*}
  W_{\mathbf{c}}=W^{(1)}\times W^{(2)}\times\cdots\times W^{(b)},
\end{equation*}
where $W^{(1)}$ is the subgroup of $W_{m,n}$ generated by $\mathfrak{S}_n^{(i)}$ and $t_j$ such that $a_{i-1}<j\leq a_i$. Then $W^{(i)}\simeq W_{m,c_i}$, which enables us to yield a natural embedding
\begin{equation*}
  \theta_{\mathbf{c}}:  W_{m,c_1}\times\cdots\times W_{m,c_b}\hookrightarrow  W_{m,n}
\end{equation*}
for each composition $\mathbf{c}$ of $n$.

For positive integer $i$, we define
\begin{eqnarray*}
  w(1,i)=t_1^i, & & w(a,i)=t_a^is_{a-1}\cdots s_1, \quad 2\leq a\leq n.
\end{eqnarray*}
Let $\mu=(\mu_1,\mu_2,\cdots)$ be a partition of n. We define
\begin{eqnarray*}
  w(\mu,i)&=&w(\mu_1,i)\times  w(\mu_2,i)\times\cdots
\end{eqnarray*}
with respect to the embedding $\theta_{\mu}$.  More generally, for $\bmu=(\mu^{(1)}, \ldots, \mu^{(m)})\in\mpn$,  we define
\begin{eqnarray*}
  w(\bmu) &=& w(\mu^{(1)},1)w(\mu^{(2)},2)\cdots w(\mu^{(m)},m).
\end{eqnarray*}
Then $\{w(\bmu)|\bmu\in\mpn\}$ is a set of conjugate classes representatives for $W_{m,n}$ (see \cite[4.2.8]{JK}).
\end{point}

\vspace{2\jot}
Following \cite[\S4]{King}, we define the  \textit{power sum supersymmetric function} $p_i(\bx/\by)$ to be $p_0(\bx/\by)=1$ and $p_i(\bx/\by)=p_i(\bx)-p_i(\by)$ for $i\geq 1$, where $p_i(\bx)$ denotes the $i$-th power sum symmetric function with respect to the variables $\bx$. For a partition $\gl=(\gl_1, \gl_2, \cdots, \gl_{r})$ of $n$, the power sum supersymmetric function $p_{\gl}(\bx/\by)\in \Lambda_{k,\ell}$ is defined as $p_{\gl}(\bx/\by)=\prod_{i=1}^rp_{\gl_i}(\bx/\by)$.

We will need the following fact, which clarifies the relationship between supersymmetric Schur functions and power sum supersymmetric functions indexed by partitions of $n$.

\begin{proposition}[\protect{\cite[(4.4)]{King}}]\label{Prop:King}Let $\chi^{\gl}$ be the irreducible character of $\mathbb{C}\mathfrak{S}_n$ corresponding to $\gl\vdash n$ and let $\chi^{\gl}(\mu)$ be the value of $\chi^{\gl}$ at elements of cycle type $\mu$. Then
\begin{equation*}
  s_{\gl}(\bx/\by)=\sum_{\mu\vdash n}Z_{\mu}^{-1}\chi^{\gl}(\mu)p_{\mu}(\bx/\by).
\end{equation*}
where  $Z_{\mu}$ is the order of the centralizer in $\mathfrak{S}_n$ of an element of cycle type $\mu$.
 \end{proposition}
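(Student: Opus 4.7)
The plan is to deduce the formula from the classical Frobenius identity
\begin{equation*}
s_{\gl} \;=\; \sum_{\mu\vdash n} Z_\mu^{-1}\chi^{\gl}(\mu)\,p_\mu
\end{equation*}
in the ring $\Lambda$ of symmetric functions in infinitely many variables, by transporting it through a single algebra homomorphism. Since $\Lambda\otimes\mathbb{Q}$ is freely generated as a $\mathbb{Q}$-algebra by the power sums $p_1,p_2,\dots$, there is a unique $\mathbb{Q}$-algebra map
\begin{equation*}
\phi\colon \Lambda\otimes\mathbb{Q}\longrightarrow\Lambda_{k,\ell},\qquad \phi(p_r)\;=\;p_r(\bx)-p_r(\by)\;=\;p_r(\bx/\by).
\end{equation*}
Tautologically $\phi(p_\mu)=p_\mu(\bx/\by)$, so the whole proposition reduces to the single identity $\phi(s_{\gl})=s_{\gl}(\bx/\by)$.

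To establish this, I would first dispatch the one-row case via generating functions. Starting from $\sum_{n\ge0}h_n t^n=\exp\!\bigl(\sum_{r\ge1}p_r t^r/r\bigr)$ and applying $\phi$ term by term,
\begin{equation*}
\sum_{n\ge0}\phi(h_n)\,t^n
\;=\;\exp\!\Bigl(\sum_{r\ge1}\tfrac{p_r(\bx)-p_r(\by)}{r}t^r\Bigr)
\;=\;\prod_{i=1}^k\frac{1}{1-x_i t}\;\cdot\;\prod_{j=1}^\ell(1-y_j t).
\end{equation*}
Direct expansion of the defining formula yields $s_{(n)}(\bx/\by)=\sum_{a+b=n}(-1)^b h_a(\bx)\,e_b(\by)$, whose generating series is exactly the same product. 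Hence $\phi(h_n)=s_{(n)}(\bx/\by)$ for every $n\ge0$; denote this common quantity by $h_n(\bx/\by)$.

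Applying the classical Jacobi--Trudi identity $s_{\gl}=\det\bigl(h_{\gl_i-i+j}\bigr)$ inside $\Lambda$ then yields $\phi(s_{\gl})=\det\bigl(h_{\gl_i-i+j}(\bx/\by)\bigr)$, and the proof is completed by the supersymmetric Jacobi--Trudi identity $s_{\gl}(\bx/\by)=\det\bigl(h_{\gl_i-i+j}(\bx/\by)\bigr)$. This last identity is the one place where genuine work is required: it can be derived by substituting the ordinary Jacobi--Trudi expansion of $s_\mu(\bx)$ and the dual (skew) Jacobi--Trudi expansion of $s_{\gl'/\mu'}(\by)$ into the defining formula
\begin{equation*}
s_{\gl}(\bx/\by)\;=\;\sum_{\mu\subset\gl}(-1)^{|\gl-\mu|}s_\mu(\bx)\,s_{\gl'/\mu'}(\by)
\end{equation*}
and collapsing the resulting double sum via a Cauchy--Binet determinantal identity, or alternatively by a Lindström--Gessel--Viennot lattice path argument on the $(k,\ell)$-semistandard tableau model recalled in Section~\ref{Sec:super-symm}. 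I expect this supersymmetric Jacobi--Trudi step to be the only real obstacle; everything else is a transparent transport of Frobenius's classical formula through the homomorphism~$\phi$.
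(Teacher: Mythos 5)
The paper offers no proof of this proposition at all --- it is quoted directly from King \cite[(4.4)]{King} --- so yours is necessarily an independent argument, and its overall strategy is sound and is in fact the standard one: since $\Lambda\otimes\mathbb{Q}$ is free on the power sums, the map $\phi(p_r)=p_r(\bx)-p_r(\by)$ is a well-defined algebra homomorphism, the classical Frobenius identity reduces the proposition to the single claim $\phi(s_{\gl})=s_{\gl}(\bx/\by)$, and your generating-function verification of the one-row case $\phi(h_n)=s_{(n)}(\bx/\by)=\sum_{a+b=n}(-1)^b h_a(\bx)e_b(\by)$ is correct. Note that the sign $(-1)^{|\gl-\mu|}$ in the paper's definition of the supersymmetric Schur function is exactly what makes the convention $p_r(\bx/\by)=p_r(\bx)-p_r(\by)$ the right specialization, so the conventions are consistent.

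The one substantive step you leave undone is the supersymmetric Jacobi--Trudi identity $s_{\gl}(\bx/\by)=\det\bigl(h_{\gl_i-i+j}(\bx/\by)\bigr)$. That identity is true and can be established by the Lindstr\"om--Gessel--Viennot argument you sketch, or simply cited (see \cite{B-Regev}, \cite{King}, or \cite[Ch.~I, \S 3, Ex.~23]{Macdonald}), so the gap is one of completeness rather than of substance; but you can bypass determinants altogether. Use the comultiplication of $\Lambda$: $s_{\gl}\mapsto\sum_{\mu\subset\gl}s_{\mu}\otimes s_{\gl/\mu}$ with the $p_r$ primitive, and compose with evaluation at $\bx$ in the first factor and with the homomorphism $p_r\mapsto-p_r(\by)$ in the second. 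Since $p_r\mapsto-p_r$ is $\omega$ followed by the automorphism acting by $(-1)^n$ in degree $n$, it sends $s_{\gl/\mu}$ to $(-1)^{|\gl-\mu|}s_{\gl'/\mu'}$, so this composite is an algebra map sending $p_r$ to $p_r(\bx)-p_r(\by)$, hence equals $\phi$, and it sends $s_{\gl}$ to $\sum_{\mu\subset\gl}(-1)^{|\gl-\mu|}s_{\mu}(\bx)s_{\gl'/\mu'}(\by)$, which is precisely the paper's defining formula for $s_{\gl}(\bx/\by)$. This closes your remaining step with no further combinatorial work.
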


Recall that $\varsigma$ is a fixed primitive $m$-th root of unity. Let $\bgl=(\gl^{(1)}, \ldots, \gl^{(m)})$ be a multipartition of $n$. Then we define the power sum supersymmetric function $P_{\bgl}(\bx/\by)$ associated to $\bgl$ as follows. For each integer $a\geq 1$ and $i$ such that $1\leq i\leq m$, set
\begin{equation*}
  P_a^{(i)}(\bx/\by)=\sum_{j=1}^m\varsigma^{-ij}p_{a}(\bx^{(j)}/\by^{(j)}).
\end{equation*}
For a partition $\gl^{(i)}=(\gl_1^{(i)}, \gl_2^{(i)}, \cdots, \gl_{r}^{(i)})$, we define a function $P_{\gl^{(i)}}(\bx/\by)\in \Lambda_{k,\ell}$ by
\begin{equation*}
  P_{\gl^{(i)}}(\bx/\by)=\prod_{j=1}^rP_{\gl_j^{(i)}}^{(i)}(\bx/\by),
\end{equation*}
and define $P_{\bgl}(\bx/\by)$ by
\begin{equation*}
  P_{\bgl}(\bx/\by)=\prod_{i=1}^mP_{\gl^{(i)}}(\bx/\by).
\end{equation*}

By abusing notations, we denote by $\chi^{\bgl}$ the irreducible character of $W_{m,n}$ corresponding to $\bgl\in\mpn$ and write $\chi^{\bgl}(\bmu)=\chi^{\bgl}(w(\bmu))$. Thanks to Proposition~\ref{Prop:King}, we can prove the following formula by applying the same argument as the one in
\cite[Appendix~B]{Macdonald}, which is a super-analogue of \cite[Appendix~B(9.5)]{Macdonald} and \cite[Proposition~6.6]{S}.
\begin{proposition}\label{Prop:super-Schur-power}Keep notations as above. Then
  \begin{equation*}
    S_{\bgl}(\bx/\by)=\sum_{\bmu\in\mpn}Z_{\bmu}^{-1}\chi^{\bgl}(\bmu)P_{\bmu}(\bx/\by),
  \end{equation*}
  where $Z_{\bmu}$ is the order of the centralizer in $W_{m,n}$ of an element of cycle type $\bmu$.
\end{proposition}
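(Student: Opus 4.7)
The plan is to transcribe the classical Macdonald Appendix B (9.5) / Shoji Proposition~6.6 argument into the supersymmetric setting. The crucial observation is that the supersymmetric power sums $p_a(\bx/\by)=p_a(\bx)-p_a(\by)$ are algebraically independent generators of a polynomial subring of $\Lambda_{k,\ell}\otimes\mathbb{Q}$ in exactly the same way that the ordinary power sums generate $\Lambda$, so every formal identity among characters and power sums transports verbatim to the super setting. Thus Proposition~\ref{Prop:King} is the essential new ingredient; the rest of the derivation is a bookkeeping computation using the Clifford theory of $W_{m,n}=(\mathbb{Z}/m\bz)^n\rtimes\fS_n$.

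First I would expand the left-hand side. Using $S_{\bgl}(\bx/\by)=\prod_{i=1}^{m} S_{\gl^{(i)}}(\bx^{(i)}/\by^{(i)})$ from \eqref{Equ:Supersymm-multi} and applying Proposition~\ref{Prop:King} componentwise to each factor yields
\begin{equation*}
S_{\bgl}(\bx/\by)=\prod_{i=1}^{m}\sum_{\nu^{(i)}\vdash|\gl^{(i)}|}Z_{\nu^{(i)}}^{-1}\chi^{\gl^{(i)}}(\nu^{(i)})\,p_{\nu^{(i)}}(\bx^{(i)}/\by^{(i)}).
\end{equation*}
Next I would perform the Fourier inversion on $\mathbb{Z}/m\bz$. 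The definition $P_a^{(i)}(\bx/\by)=\sum_{j=1}^{m}\varsigma^{-ij}p_a(\bx^{(j)}/\by^{(j)})$ combined with the orthogonality $\sum_{i=1}^{m}\varsigma^{i(j-j')}=m\,\delta_{j,j'}$ gives
\begin{equation*}
p_a(\bx^{(j)}/\by^{(j)})=\frac{1}{m}\sum_{i=1}^{m}\varsigma^{ij}P_a^{(i)}(\bx/\by).
\end{equation*}
Substituting this into the previous display and expanding multiplicatively produces a sum indexed by multipartitions $\bmu\in\mpn$ of the form $\sum_{\bmu}C_{\bgl,\bmu}P_{\bmu}(\bx/\by)$.

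The final and only substantive step is to identify $C_{\bgl,\bmu}$ with $Z_{\bmu}^{-1}\chi^{\bgl}(\bmu)$. For this I would invoke the standard Frobenius-type formula for the characters of the wreath product $W_{m,n}$ (see \cite[Appendix~B]{Macdonald} or \cite[4.2]{JK}), which expresses $\chi^{\bgl}(w(\bmu))$ as an explicit sum of products of symmetric-group character values $\chi^{\gl^{(i)}}(\nu^{(i)})$ twisted by the scalars $\varsigma^{ij}$ coming from the $(\mathbb{Z}/m\bz)$-factors of the cycle-type data. The matching of this expression with $C_{\bgl,\bmu}$ is exactly the identity that drives the non-super version, and the centralizer orders $Z_{\bmu}$ in $W_{m,n}$ absorb the factors of $m^{\ell(\bmu)}$ and the products $\prod_i Z_{\nu^{(i)}}$ arising from the semidirect-product structure. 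The main obstacle is therefore purely notational: one must carefully track the two parametrizations of multipartitions — by "colors" (via $\bx^{(j)}/\by^{(j)}$) versus by "characters" of the cyclic group (via $P_a^{(i)}$) — and confirm that the Fourier transform linking them is precisely the one implicit in the construction of $\chi^{\bgl}$. Since $S_{\gl^{(i)}}(\bx^{(i)}/\by^{(i)})$ automatically vanishes off hook partitions, no extra restriction on the range of $\bgl$ needs to be imposed, and the identity holds uniformly for all $\bgl\in\mpn$.
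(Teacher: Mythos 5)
Your proposal is correct and takes essentially the same route as the paper: the paper's proof consists precisely of invoking Proposition~\ref{Prop:King} as the super ingredient and then running the wreath-product argument of Macdonald's Appendix~B, which is exactly what your componentwise application of Proposition~\ref{Prop:King}, Fourier inversion over $\mathbb{Z}/m\mathbb{Z}$, and identification of the resulting coefficients with the $W_{m,n}$ character formula spell out. Nothing essential is missing.
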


\section{The super Frobenius formula}
In this section, we define an operator $D$ on $V^{\otimes n}$ and compute the trace of product of $D$ and standard elements of  $\mathcal{H}$ by applying the technique initiated by Ram in \cite{Ram}, which enable us present the super Frobenius formula for the characters of $\mathcal{H}$.

\begin{point}{}*
Let $\mathbb{K}'=\mathbb{K}(z_1, z_2, \ldots, z_{k+\ell})$ be the field of rational functions on $\mathbb{K}$. In the remainder of this paper, we assume that $\mathcal{H}$, $U_q(\fg)$, $V$, etc., are defined over $\mathbb{K}'$ and use the same notations such as $\mathcal{H}$, $U_q(\fg)$, $V$, etc. Let us remark that Theorem~\ref{Them:Schur-Weyl} holds for $\mathbb{K}'$, and we may and will identify $h\in \mathcal{H}$ with $\Phi(h)\in \mathrm{End}_{U_q(\mathfrak{g})}(V^{\otimes n})$.

Let \begin{eqnarray*}
          &&\mathscr{I}^{+}(n;k|\ell)=\{\bi=(i_1, \ldots, i_n)|1\leq i_1\leq i_2\leq\cdots\leq i_r\leq k+\ell\},\\
      &&\mathscr{C}(n;k|\ell)=\{\bc=(c_1, \ldots, c_{k+\ell})|c_a\geq 0, |\bc|=\sum_{a=1}^{k+\ell} c_a=n\}.
    \end{eqnarray*}
For $\bi=(i_1, \ldots, i_n)\in \mathscr{I}(n;k|\ell)$, we define $\mathrm{wt}(\bi)=(c_1, \ldots, c_{k+\ell})$ with $c_a$ equaling  the times of $a$ appearing in $\bi$. Given $\bc\in\mathscr{C}(n;k|\ell)$, let $V_{\bc}^{\otimes n}$ be the subspace of $V^{\otimes n}$ defined by
\begin{equation*}
 V_{\bc}^{\otimes n}=\mathrm{Span}_{\mathbb{K}'}\left\{v_{\bi}\left|\bi\in\mathscr{I}(n;k|\ell)\text{ with }\mathrm{wt}(\bi)=\bc\right\}\right..
\end{equation*}
For $i=1, \ldots, k+\ell$, we define $P_i\in\mathrm{End}_{\mathbb{K}'}(V)$ to be the projections from $V$ onto $V_i$, that is, $P_i(v_k)=\delta_{ik}v_k$ and let
\begin{equation*}
  P_{\bc}=\sum_{\mathrm{wt}(\bi)=\bc}P_{i_1}\otimes\cdots\otimes P_{i_n}.
\end{equation*}
Then $P_{\bc}$ is a projection from $V^{\otimes n}$ to $V_{\bc}^{\otimes n}$. Let us remark that $P_{\bc}$ commutes with the action of $\mathcal{H}$, which means $P_{\bc}$ belongs to $\Psi^{\otimes n}(U_q(\fg))$.

For $\bc=(c_1, \ldots, c_{k+\ell})\in\mathscr{C}(n;k|\ell)$, we let $z^{\bc}=z_1^{c_1}\cdots z_{k+\ell}^{c_{k+\ell}}$ and define an operator $D$ on $V^{\otimes n}$ by
\begin{equation*}
  D=\sum_{\bc\in\mathscr{C}(n,k|\ell)}z^{\bc}P_{\bc}.
\end{equation*}
From now on, we replacing $z_1, \ldots, z_k$ by $x_1, \ldots, x_k$ and $z_{k+1}, \ldots, z_{k+\ell}$ by $-y_1, \ldots, -y_{\ell}$.

Let $\mathbb{C'}=\mathbb{C}(\bx,\by)$. In a similar way as above, we consider the $U(\fg)\otimes \mathbb{C}'W_{m,n}$-module $\widetilde{V}^{\otimes n}$, where $\widetilde{V}=\mathbb{C'}\otimes_{\mathbb{C}}\overline{V}$, and define a similar operator $D=\sum_{\bc\in\mathscr{C}(n,k|\ell)}z^{\bc}p_{\bc}$ over $\widetilde{V}^{\otimes n}$, where
$p_{\bc}$ is a projection from $\widetilde{V}^{\otimes n}$ to $\widetilde{V}_{\bc}^{\otimes n}$.

By a similar computation as the proof of \cite[Lemma~3.1]{Zhao}, we obtain the following fact.
\end{point}

\begin{lemma}\label{Lemm:Trace=Super-power}
  Let $w(n,i)=t_n^is_ns_{n-1}\cdots s_2$. Then
  \begin{equation*}
    \mathrm{Trace}(Dw(n,i),\widetilde{V}^{\otimes n})=P_n^{(i)}(\bx/\by).
  \end{equation*}
  More generally, for $\bgl\in \mpn$, we have
  \begin{equation*}
    \mathrm{Trace}(Dw(\bgl),\widetilde{V}^{\otimes n})=P_{\bgl}(\bx/\by).
  \end{equation*}
\end{lemma}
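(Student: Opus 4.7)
The plan is to reduce both statements to a direct matrix-element computation in the tensor-product basis $\{\bi\}_{\bi\in\mathscr{I}(n;k|\ell)}$ of $\widetilde{V}^{\otimes n}$, exploiting a tensor factorization of $D$. Since $z^{\mathrm{wt}(\bi)} = \prod_{a=1}^n z_{i_a}$ and $P_{\bc} = \sum_{\mathrm{wt}(\bi)=\bc} P_{i_1}\otimes\cdots\otimes P_{i_n}$, one has $D = D_0^{\otimes n}$ where $D_0 = \sum_{j=1}^{k+\ell} z_j P_j \in \mathrm{End}_{\mathbb{C}'}(\widetilde{V})$. Because $D$ is diagonal in the $\{\bi\}$-basis, the trace reduces to
\begin{equation*}
\mathrm{Trace}(Dw(n,i),\widetilde{V}^{\otimes n}) = \sum_{\bi\in\mathscr{I}(n;k|\ell)} z^{\mathrm{wt}(\bi)}\,\langle\bi\mid w(n,i)\bi\rangle.
\end{equation*}

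For the first assertion, at the specialization $q=1$, $Q_j=\varsigma^j$ each generator $s_a$ with $a\geq 2$ acts as $T_a = \phi(s_a)$, i.e. a signed swap of positions $a-1,a$; a short induction based on $t_a = s_a t_{a-1} s_a$ together with the relation $T_1 = \Omega_1$ shows that $t_n$ acts diagonally via $\bi\mapsto \varsigma^{c_n(\bi)}\bi$. Consequently $s_n s_{n-1}\cdots s_2$ effects the cyclic shift $(i_1,\ldots,i_n)\mapsto(i_2,\ldots,i_n,i_1)$ up to a super-sign, so a nonzero diagonal contribution can arise only from constant tuples $\bi=(j,j,\ldots,j)$ with $j\in\{1,\ldots,k+\ell\}$. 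For such $\bi$, each of the $n-1$ swaps falls into the ``equal'' case and contributes $(-1)^{\bar j}$, while $t_n^i$ contributes $\varsigma^{ic(j)}$, giving $\langle\bi\mid w(n,i)\bi\rangle = (-1)^{(n-1)\bar j}\varsigma^{ic(j)}$. Grouping the summands $z_j^n(-1)^{(n-1)\bar j}\varsigma^{ic(j)}$ by color $c(j)$ and separating even from odd indices---with $z_j=x$, $\bar j=0$ producing $x^n$ and $z_j=-y$, $\bar j=1$ producing $(-y)^n(-1)^{n-1}=-y^n$---assembles the answer as $\sum_{c=1}^m\varsigma^{ic}\bigl(p_n(\bx^{(c)})-p_n(\by^{(c)})\bigr) = P_n^{(i)}(\bx/\by)$.

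The general formula then follows by multiplicativity over a block decomposition. Writing $\bc$ for the composition of $n$ obtained by concatenating the parts of $\bgl$, the element $w(\bgl)$ is a product of pairwise commuting factors $w(\gl_s^{(i)},i)$ each supported on a disjoint block via the embedding $\theta_{\bc}$. Under the compatible tensor decomposition $\widetilde{V}^{\otimes n} = \bigotimes_{i,s}\widetilde{V}^{\otimes\gl_s^{(i)}}$ the operator $D = D_0^{\otimes n}$ splits as $\bigotimes_{i,s} D_0^{\otimes\gl_s^{(i)}}$ and $w(\bgl)$ acts blockwise, so trace multiplicativity combined with the first assertion yields
\begin{equation*}
\mathrm{Trace}(Dw(\bgl),\widetilde{V}^{\otimes n}) = \prod_{i,s}\mathrm{Trace}\bigl(D_0^{\otimes\gl_s^{(i)}} w(\gl_s^{(i)},i),\widetilde{V}^{\otimes\gl_s^{(i)}}\bigr) = \prod_{i,s} P_{\gl_s^{(i)}}^{(i)}(\bx/\by) = P_{\bgl}(\bx/\by).
\end{equation*}
The main delicate point I anticipate is the sign bookkeeping in the base case: one must reconcile the super-sign $(-1)^{(n-1)\bar j}$ produced by the chain of $T_a$'s with the sign implicit in the identification $z_j=-y$ for odd $j$, so that precisely $+x^n - y^n$ emerges within each color and reproduces $p_n(\bx^{(c)}/\by^{(c)})$.
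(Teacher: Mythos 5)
Your proposal is correct and follows essentially the same route the paper takes (it delegates the computation to the analogue of \cite[Lemma~3.1]{Zhao}): a direct diagonal-entry computation in the basis $\{\bi\}$ showing that only constant tuples survive the cyclic shift, followed by multiplicativity of the trace over the block decomposition coming from $\theta_{\bc}$ and $D=D_0^{\otimes n}$. One small bookkeeping remark: with the paper's stated conventions $\phi(Q_j)=\varsigma^j$ and $P_a^{(i)}(\bx/\by)=\sum_j\varsigma^{-ij}p_a(\bx^{(j)}/\by^{(j)})$, your (correct) computation yields $\sum_c\varsigma^{+ic}p_n(\bx^{(c)}/\by^{(c)})$, so the exponent sign matches only after replacing $\varsigma$ by $\varsigma^{-1}$ (a harmless convention discrepancy in the paper rather than a flaw in your argument).
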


 \begin{lemma}\label{Lemm:Trace=Super-Schur} Let $p_{\bgl}$ be a primitive idempotent in $\mathbb{C}W_{m,n}$ affording the irreducible representation of $W_{m,n}$ corresponding to $\bgl\in\mpn$. Then
 \begin{equation*}
    \mathrm{Trace}(Dp_{\bgl},\overline{V}^{\otimes n})=S_{\bgl}(\bx/\by).
 \end{equation*}
 \end{lemma}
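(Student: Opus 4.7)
My plan is to reduce the trace to Lemma~\ref{Lemm:Trace=Super-power} and Proposition~\ref{Prop:super-Schur-power} via a standard symmetrization of the idempotent. The specialization of Theorem~\ref{Them:Schur-Weyl} at $q=1$ and $Q_i=\varsigma^i$ gives the $U(\fg)\otimes\mathbb{C}'W_{m,n}$-bimodule decomposition of $\overline{V}^{\otimes n}$ into isotypic components $V_{\bnu}\otimes S^{\bnu}$ indexed by $\bnu\in H(\bk|\bl;m,n)$. Since each projection $P_{\bc}$ commutes with the action of $W_{m,n}$, the operator $D$ lies in the commutant of $\mathbb{C}'W_{m,n}$; consequently the linear functional $X\mapsto\mathrm{Trace}(DX,\overline{V}^{\otimes n})$ on $\mathbb{C}'W_{m,n}$ is a class function on $W_{m,n}$.

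Next I would exploit this class-function property to replace $p_{\bgl}$ by its orbit sum $\overline{p_{\bgl}}:=|W_{m,n}|^{-1}\sum_{g\in W_{m,n}}gp_{\bgl}g^{-1}$ without changing the trace. Because $p_{\bgl}$ is a primitive idempotent lying inside the simple two-sided ideal of $\mathbb{C}W_{m,n}$ indexed by $\bgl$, a standard Schur-lemma computation identifies $\overline{p_{\bgl}}$ with $|W_{m,n}|^{-1}\sum_g\chi^{\bgl}(g^{-1})g$. Grouping the sum by conjugacy classes of $W_{m,n}$, which are parameterized by $\bmu\in\mpn$ with representative $w(\bmu)$ and class cardinality $|W_{m,n}|/Z_{\bmu}$, and then applying Lemma~\ref{Lemm:Trace=Super-power}, gives
\[
\mathrm{Trace}(Dp_{\bgl},\overline{V}^{\otimes n})=\sum_{\bmu\in\mpn}Z_{\bmu}^{-1}\chi^{\bgl}(\bmu)\,\mathrm{Trace}(Dw(\bmu),\overline{V}^{\otimes n})=\sum_{\bmu\in\mpn}Z_{\bmu}^{-1}\chi^{\bgl}(\bmu)P_{\bmu}(\bx/\by).
\]
Proposition~\ref{Prop:super-Schur-power} then identifies the last sum with $S_{\bgl}(\bx/\by)$, finishing the argument.

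The step that will require the most care is the character-theoretic bookkeeping in the middle identity: for $W_{m,n}$ the conjugacy classes labeled by multipartitions are not automatically closed under inversion (the $\mathbb{Z}/m$-colors attached to the cycles get negated), so one has to confirm that $\chi^{\bgl}(g^{-1})$ matches the value $\chi^{\bgl}(\bmu)$ appearing in Proposition~\ref{Prop:super-Schur-power} under the convention for $w(\bmu)$ chosen earlier, or reindex the resulting sum via the involution $\bmu\mapsto\bmu^{-1}$ on $\mpn$. Once this bookkeeping is handled, the remainder is a direct assembly of Lemma~\ref{Lemm:Trace=Super-power} and Proposition~\ref{Prop:super-Schur-power}.
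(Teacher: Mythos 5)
Your proposal is correct and takes essentially the same route as the paper: the conjugation-averaging of $p_{\bgl}$ into $|W_{m,n}|^{-1}\sum_{w}\chi^{\bgl}(w^{-1})w$ is precisely the argument the paper cites from Ram's Lemma~3.7, after which both proofs use that $\mathrm{Trace}(Dw,\overline{V}^{\otimes n})$ is a class function, group by conjugacy classes, and conclude via Lemma~\ref{Lemm:Trace=Super-power} and Proposition~\ref{Prop:super-Schur-power}. The inversion/conjugation bookkeeping you flag is simply glossed over in the paper (which writes $\chi^{\bgl}(w)$ rather than $\chi^{\bgl}(w^{-1})$), so on that point your write-up is, if anything, the more careful one.
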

 \begin{proof}
   The proof can be done by applying Ram's argument in \cite[Lemma~3.7]{Ram}. In fact, Ram's argument shows
   \begin{equation*}
     \mathrm{Trace}(Dp_{\bgl}, \overline{V}^{\otimes n})=|W_{m,n}|^{-1}\sum_{w\in W_{m,n}}\chi^{\bgl}(w)\mathrm{Trace}(Dw, \overline{V}^{\otimes n}).
   \end{equation*}
   Since $D$ commutes with the action of $W_{m,n}$, $\mathrm{Trace}(Dw)$ only depends on the conjugate class containing $w$. Thus Lemma~\ref{Lemm:Trace=Super-power} implies
   \begin{equation*}
     \mathrm{Trace}(Dp_{\bgl}, \overline{V}^{\otimes n})=|W_{m,n}|^{-1}\sum_{w\in W_{m,n}}c_{\bmu}\chi^{\bgl}(\bmu)\mathrm{Trace}(Dw(\bmu), \overline{V}^{\otimes n}),
   \end{equation*}
   where $c_{\bmu}$ is the cardinality of the conjugate class of cycle type $\bmu$.
   Now the lemma follows by applying Proposition~\ref{Prop:super-Schur-power} and Lemma~\ref{Lemm:Trace=Super-power}.
 \end{proof}

 Now we can prove the computing formula for the trace of $Dh$ for all $h\in \mathcal{H}$.
\begin{proposition}\label{Prop:Trace-Dh}
  For any $h\in \mathcal{H}$, we have
  \begin{equation*}
    \mathrm{Trace}(Dh, V^{\otimes n})=\sum_{\bgl\in\mpn}\chi^{\bgl}(h)S_{\bgl}(\bx/\by).
  \end{equation*}
\end{proposition}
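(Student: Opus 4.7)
The plan is to combine the Schur--Weyl reciprocity of Theorem~\ref{Them:Schur-Weyl} with the trace computation of Lemma~\ref{Lemm:Trace=Super-Schur}. Since each projector $P_{\bc}$ commutes with the action of $\mathcal{H}$, the operator $D=\sum_{\bc}z^{\bc}P_{\bc}$ lies in $\Psi^{\otimes n}(U_q(\fg))=\mathrm{End}_{\mathcal{H}}(V^{\otimes n})$. Under the $U_q(\fg)\otimes\mathcal{H}$-bimodule isomorphism
\begin{equation*}
V^{\otimes n}\cong\bigoplus_{\bgl\in H(\bk|\bl;m,n)}V_{\bgl}\otimes S^{\bgl},
\end{equation*}
the operator $D$ acts on the summand indexed by $\bgl$ as $D_{\bgl}\otimes\mathrm{id}$ for some $D_{\bgl}\in\mathrm{End}_{\mathbb{K}'}(V_{\bgl})$, while an arbitrary $h\in\mathcal{H}$ acts as $\mathrm{id}\otimes\rho_{\bgl}(h)$, where $\rho_{\bgl}$ denotes the irreducible representation of $\mathcal{H}$ afforded by $S^{\bgl}$. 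Taking traces then yields
\begin{equation*}
\mathrm{Trace}(Dh,V^{\otimes n})=\sum_{\bgl\in H(\bk|\bl;m,n)}\mathrm{Trace}(D_{\bgl},V_{\bgl})\,\chi^{\bgl}(h),
\end{equation*}
so everything reduces to identifying $\mathrm{Trace}(D_{\bgl},V_{\bgl})$ with $S_{\bgl}(\bx/\by)$.

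To establish this identification I would pass to the classical bimodule decomposition $\overline{V}^{\otimes n}\cong\bigoplus_{\bgl}\overline{V}_{\bgl}\otimes\overline{S}^{\bgl}$ as a $U(\fg)\otimes\mathbb{C}W_{m,n}$-module. For a primitive idempotent $p_{\bgl}\in\mathbb{C}W_{m,n}$ corresponding to $\bgl$, the same splitting argument gives
\begin{equation*}
\mathrm{Trace}(Dp_{\bgl},\overline{V}^{\otimes n})=\mathrm{Trace}(D|_{\overline{V}_{\bgl}})\cdot\mathrm{Trace}(p_{\bgl}|_{\overline{S}^{\bgl}})=\mathrm{Trace}(D|_{\overline{V}_{\bgl}}),
\end{equation*}
and by Lemma~\ref{Lemm:Trace=Super-Schur} the left-hand side equals $S_{\bgl}(\bx/\by)$. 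Since $D$ is diagonal with respect to the weight basis and scales the $\bc$-weight space by $z^{\bc}$, both $\mathrm{Trace}(D_{\bgl},V_{\bgl})$ and $\mathrm{Trace}(D|_{\overline{V}_{\bgl}})$ equal the character of $V_{\bgl}$ evaluated at the specialization $(z_1,\ldots,z_{k+\ell})=(x_1,\ldots,x_k,-y_1,\ldots,-y_{\ell})$; here one uses the well-known fact that the weight multiplicities of the irreducible polynomial $U_q(\fgl(k_i|\ell_i))$-module $V_{\gl^{(i)}}$ agree with those of its classical counterpart, so that the character factors as $\prod_{i=1}^{m}\mathrm{ch}(V_{\gl^{(i)}})$ and specializes to $\prod_i S_{\gl^{(i)}}(\bx^{(i)}/\by^{(i)})=S_{\bgl}(\bx/\by)$.

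Finally, since $S_{\bgl}(\bx/\by)=0$ whenever some $\gl^{(i)}$ fails to be a $(k_i,\ell_i)$-hook partition, the summation may be extended from $H(\bk|\bl;m,n)$ to all of $\mpn$, yielding the stated formula. The main obstacle is the last identification of weight multiplicities between the quantum and classical irreducible modules, which is standard in the theory of quantum superalgebras of type $A$ but must be invoked with care; alternatively, one can argue directly via the specialization homomorphism $q\to 1$, $Q_i\to\varsigma^i$ described in the remark preceding this section, which simultaneously turns $\mathcal{H}$ into $\mathbb{C}W_{m,n}$ and $V^{\otimes n}$ into $\overline{V}^{\otimes n}$.
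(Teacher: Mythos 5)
Your overall strategy is sound, and it differs from the paper's proof at the key quantum-to-classical step, so let me compare. Your first reduction is fine: since $D$ commutes with $\mathcal{H}$, Theorem~\ref{Them:Schur-Weyl} lets you write $D=\bigoplus_{\bgl}D_{\bgl}\otimes\mathrm{id}$ on $\bigoplus_{\bgl}V_{\bgl}\otimes S^{\bgl}$, whence $\mathrm{Trace}(Dh,V^{\otimes n})=\sum_{\bgl}\mathrm{Trace}(D_{\bgl},V_{\bgl})\chi^{\bgl}(h)$; the paper reaches the analogous reduction instead through Shoji's complete family of primitive idempotents $P_i^{\bgl}\in\mathcal{H}$ compatible with the specialization $\phi$ (\cite[Theorem~5.14]{S}) and Mitsuhashi's matrix-coefficient computation, giving $\mathrm{Trace}(Dh)=\sum_{\bgl}\sum_i h^{\bgl}_{ii}\mathrm{Trace}(DP_i^{\bgl})$. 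The real divergence is how the multiplicity-space trace is identified with $S_{\bgl}(\bx/\by)$. The paper uses Ram's argument (\cite[Lemma~3.5]{Ram}) to show that $\mathrm{Trace}(DP_i^{\bgl},V^{\otimes n})$ is independent of $q$ and $\bq$, so it may be evaluated after specialization, where Lemma~\ref{Lemm:Trace=Super-Schur} applies. You instead appeal to the fact that the irreducible polynomial $U_q(\fgl(k_i|\ell_i))$-modules occurring in tensor space have the same weight multiplicities (hence the same character, the hook Schur function) as their classical counterparts, so that $\mathrm{Trace}(D_{\bgl},V_{\bgl})$ coincides with the classical trace, which Lemma~\ref{Lemm:Trace=Super-Schur} identifies with $S_{\bgl}(\bx/\by)$. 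That fact is true and citable (it follows from the crystal-basis theory of \cite{BKK}, which the paper already uses for the nonvanishing of $s_{\bk|\bl}(\bgl)$), but it is precisely the nontrivial external input your proof must make explicit: for superalgebras it is not a formal consequence of generic deformation, since the finite-dimensional module category is not semisimple in general, and one needs the statement specifically for the modules appearing in $V^{\otimes n}$. With a precise citation your route closes, and indeed it would then let you bypass Lemma~\ref{Lemm:Trace=Super-Schur} altogether; what it buys is avoiding Shoji's idempotents and Ram's independence trick, at the cost of importing the quantum character formula.

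One caution: your fallback suggestion to ``argue directly via the specialization homomorphism'' is not a proof as stated, and in fact is the very point the paper's argument is designed to handle. A priori $\mathrm{Trace}(D_{\bgl},V_{\bgl})$ (or $\mathrm{Trace}(DP_i^{\bgl},V^{\otimes n})$) is a rational function of $q$ and $\bq$, and one must show it has no pole at $q=1$, $Q_i=\varsigma^i$ and that its value there is the corresponding classical trace; this is exactly what the combination of \cite[Theorem~5.14]{S} (idempotents behaving well under $\phi$) and the Ram-style independence argument supplies in the paper. So either make the character comparison with \cite{BKK} explicit, or reproduce the independence-of-parameters argument; as written, the last sentence of your proposal would leave a gap.
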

\begin{proof}The proof is done in the same way as in the proof of \cite[Proposition~6.6]{S}. By \cite[Theorem~5.14]{S}, there exists a complete family of mutually orthogonal primitive idempotents
\begin{equation*}
  \{P_{i}^{\bgl}|\bgl\in\mpn, i=1, \cdots, d_{\bgl}=\dim S^{\bgl}\}
\end{equation*}
 such that each $\phi(P_{i}^{\bgl})$ is a primitive idempotent corresponding to the irreducible representation of $\mathbb{C}W_{m,n}$ indexed by $\bgl$. By the same argument as in the proof of \cite[Theorem~4.4]{M2010}, we get
\begin{equation}\label{Equ:trace}
  \mathrm{Trace}(Dh, V^{\otimes n})=\sum_{\bgl\in\mpn}\sum_{i=1}^{d_{\bgl}}h_{ii}^{\bgl}\mathrm{Trace}(DP_i^{\bgl},V^{\otimes n}),
\end{equation}
where $h_{ii}^{\bgl}$ is the diagonal entry of the matrix of $h$ in $S^{\bgl}$.

 Using Ram's argument in the proof of \cite[Lemma~3.5]{Ram}, we can show $\mathrm{Trace}(DP_{i}^{\bgl}, V^{\otimes n})$ is independent of  $q$ and $\mathbf{Q}$. Therefore it is invariant under the specialization homomorphism $\phi$. As a consequence, we obtain
 \begin{equation*}
   \mathrm{Trace}(DP_i^{\bgl},V^{\otimes n})=\mathrm{Trace}(D\phi(P_i^{\bgl}),\overline{V}^{\otimes n})
   =S_{\bgl}(\bx/\by)
 \end{equation*}
 due to Lemma~\ref{Lemm:Trace=Super-Schur} and complete the proof.
\end{proof}

For $\bmu\in \mathscr{P}_{m,n}$, we define the \textit{standard element} $g(\bmu)\in\mathcal{H}$ of type $\bmu$ by imitating the definition of $w(\bmu)$ in \S\ref{subsec:standard-elements} as follows.
First for $a\geq 2$, we put $g(a,i)=\xi_a^ig_{a}\cdots g_2$.  Then for a partition $\mu=(\mu_1,\mu_2,\ldots)\vdash n$, we define
\begin{equation*}
  g(\mu,i)=g(\mu_1,i)\times g(\mu_2,i)\times\cdots
\end{equation*}
Finally, for $\bmu=(\mu^{(1)};\mu^{(2)}; \ldots; \mu^{(m)})\in\mpn$, we define $g(\bmu)\in\mathcal{H}$ by
\begin{eqnarray}\label{Equ:T-bmu}
   &&g(\bmu)=g(\mu^{(1)},1)\times g(\mu^{(2)},2)\times\cdots\times g(\mu^{(m)},m).
\end{eqnarray}
More generally, we define $g(w)=\xi_1^{c_1}\cdots\xi_n^{c_n}g_\sigma$ for $w=t_1^{_1}\cdots t_n^{c_n}\sigma\in W_{m,n}$ (see \S\ref{subsec:G(m,1,n)}). In \cite[Proposition~7.5]{S}, Shoji proved that the characters $\chi^{\bgl}$ of $\mathcal{H}$ are completely by their values on $g(\bmu)$ for all $\bmu\in\mpn$. For simplicity, we write $\chi^{\bgl}(\bmu)=\chi^{\bgl}(g(\bmu))$.

From now on, we will write $T(w)=\Phi(g(w))$ for $w\in W_{m,n}$, that is, $T(w)=\Omega^{c_1}_1\cdots \Omega_{n}^{c_n}T_{n}\cdots T_2$ for $w=t_1^{c_1}t_2^{_2}\cdots t_n^{c_n}s_n\cdots s_2\in W_{m,n}$.  Furthermore, we write $T(\bmu)=\Phi(g(\bmu))$.

Note that for any $\bi\in\mathscr{I}^{+}(n;k|\ell)$, $\bi$ may be written uniquely as the following form
\begin{align*}\label{Equ:i=(alpha;beta)}
 \bi(\alpha;\beta)&=(\stackrel{\alpha_1^{(1)}}{\overbrace{1,\ldots,1}}, \cdots,\stackrel{\alpha_{k_1}^{(1)}}{\overbrace{k_1,\ldots,k_1}};
 \stackrel{\beta_1^{(1)}}{\overbrace{k_1\!\!+\!\!1, \ldots, k_1\!\!+\!\!1}}, \cdots, \stackrel{\beta_{\ell}^{(1)}}{\overbrace{d_1,\ldots, d_1}};\cdots;\stackrel{\alpha_1^{(m)}}{\overbrace{d_{m\!-\!1}\!\!+\!\!1,\ldots,
 d_{m\!-\!1}\!\!+\!\!1}}, \\&\qquad\cdots,\stackrel{\alpha_{k_m}^{(m)}}{\overbrace{d_{m\!-\!1}\!\!+\!\!k_m,\ldots,d_{m\!-\!1}\!\!+\!\!k_m}};
 \stackrel{\beta_1^{(m)}}{\overbrace{d_m\!\!-\ell_m\!\!+\!\!1, \ldots, d_m\!\!-\ell_m\!\!+\!\!1}}, \cdots, \stackrel{\beta_{\ell_m}^{(m)}}{\overbrace{d_m,\ldots, d_m}})
\end{align*}for some $\alpha=(\alpha^{(1)};\ldots;\alpha^{(m)})$, $\beta=(\beta^{(1)};\ldots;\beta^{(m)})$ with $(\alpha;\beta)\in\mathscr{C}(n;k|\ell)$. Thus we may identify $\mathscr{I}^{+}(n;k|\ell)$ with $\mathscr{C}(n;k|\ell)$ as above. Further  we define the following function
\begin{equation*}
  \tilde{q}_{(\alpha;\beta)}(\bx/\by;q)=(\!-\!1)^{|\beta|-\ell(\beta)}q^{|\alpha|-\ell(\alpha)+\ell(\beta)-|\beta|}
   (q\!-\!q^{-\!1})^{\ell(\alpha;\beta)-1}\bx^{\alpha}(\!-\by)^{\beta},
\end{equation*}
for each $(\alpha;\beta)\in\mathscr{C}(n;k|\ell)$.

Let us remark that the relationship between $\tilde{q}_{(\alpha;\beta)}(\bx/\by;q)$ and super Hall-Littlewood function is described as following
(see \cite[Theorem~5.3]{Mit}):
\begin{equation}\label{Equ:q-q}
  \sum_{(\alpha;\beta)\in\mathscr{C}(n;k|\ell)}\tilde{q}_{(\alpha;\beta)}(\bx/\by;q)=
  \frac{q^n}{q-q^{-1}}q_n(\bx/\by;q^{-2}).
\end{equation}

Now we can determine the trace of $DT(w)$ for $w=t_1^{c_1}t_2^{_2}\cdots t_n^{c_n}s_n\cdots s_2\in W_{m,n}$.
\begin{lemma}\label{Lemm:Trace-Tw}
  Let $T(w)=\Omega^{c_1}_1\cdots \Omega_{n}^{c_n}T_{n}\cdots T_2$. Then
  \begin{equation*}
    \mathrm{Trace}(DT(w),V^{\otimes n})=\sum_{\substack{(\alpha;\beta)\in\mathscr{C}(n;k|\ell)}}
   \bq_{(\alpha;\beta)}^{\bc}\tilde{q}_{(\alpha;\beta)}(\bx/\by;q),
  \end{equation*}
  where $\bq_{(\alpha;\beta)}^{\bc}=Q^{c_1}_{c_1(\bi)}\cdots Q^{c_n}_{c_n(\bi)}$ with $\bi=(\alpha;\beta)$.
\end{lemma}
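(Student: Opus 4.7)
My plan is to exploit the simultaneous diagonality of $D$ and the $\Omega_j^{c_j}$ in the monomial basis $\{v_\bi\}_{\bi\in\mathscr{I}(n;k|\ell)}$, reducing the computation to that of the diagonal entries of the cycle operator $T_nT_{n-1}\cdots T_2$. Since $D,\Omega_1^{c_1},\ldots,\Omega_n^{c_n}$ are pairwise commuting and simultaneously diagonal, setting $\widetilde{D}:=D\,\Omega_1^{c_1}\cdots\Omega_n^{c_n}$ gives $\widetilde{D}(v_\bi)=z^{\mathrm{wt}(\bi)}\prod_{j=1}^{n}Q_{c_j(\bi)}^{c_j}\,v_\bi$, and cyclicity of the trace yields
\begin{equation*}
\mathrm{Trace}(DT(w),V^{\otimes n})
=\mathrm{Trace}\bigl(\widetilde{D}\,T_nT_{n-1}\cdots T_2,V^{\otimes n}\bigr)
=\sum_{\bi\in\mathscr{I}(n;k|\ell)}z^{\mathrm{wt}(\bi)}\left(\prod_{j=1}^{n}Q_{c_j(\bi)}^{c_j}\right)\tau_\bi,
\end{equation*}
where $\tau_\bi$ denotes the coefficient of $v_\bi$ in $T_nT_{n-1}\cdots T_2(v_\bi)$.

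The heart of the proof is to establish that $\tau_\bi=0$ unless $\bi\in\mathscr{I}^{+}(n;k|\ell)$, and that for $\bi=\bi(\alpha;\beta)$ weakly increasing one has
\begin{equation*}
\tau_{\bi(\alpha;\beta)}=(-1)^{|\beta|-\ell(\beta)}\,q^{|\alpha|-\ell(\alpha)+\ell(\beta)-|\beta|}\,(q-q^{-1})^{\ell(\alpha;\beta)-1}.
\end{equation*}
This is the super-cyclotomic analogue of the type-$A$ computation of Ram \cite{Ram} and of its super counterpart \cite[Theorem~5.3]{M2010}, and proceeds by iterated application of (\ref{Equ:Ta-action}). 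Applied to a basis vector $v_\bj$, the operator $T_a$ splits into a diagonal piece (present only when $j_{a-1}\leq j_a$, equal to $q-q^{-1}$ when $j_{a-1}<j_a$ and to $q$ or $-q^{-1}$ when $j_{a-1}=j_a$ according as $j_{a-1}$ is even or odd) and a strict off-diagonal swap $\pm v_{\bj s_a}$ (absent when $j_{a-1}=j_a$ and the only term when $j_{a-1}>j_a$). Since every subsequent $T_b$ with $b>a$ acts only on positions $\geq a$, any swap at step $a$ installs at position $a-1$ a value that cannot later be altered; a straightforward positional induction then shows that no combination of swap choices returns to $v_\bi$ unless $\bi$ is itself weakly increasing and every choice is diagonal, in which case the scalar factors combine precisely into the displayed formula: the $\ell(\alpha;\beta)-1$ strict ascents of $\bi$ contribute $(q-q^{-1})^{\ell(\alpha;\beta)-1}$, the $|\alpha|-\ell(\alpha)$ even coincidences contribute $q^{|\alpha|-\ell(\alpha)}$, and the $|\beta|-\ell(\beta)$ odd coincidences contribute $(-q^{-1})^{|\beta|-\ell(\beta)}$.

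Finally, for $\bi=\bi(\alpha;\beta)$ the substitution $z_a=x_a$ ($a\leq k$) and $z_{k+b}=-y_b$ ($b\leq\ell$) gives $z^{\mathrm{wt}(\bi)}=\bx^{\alpha}(-\by)^{\beta}$, while $\prod_{j=1}^{n}Q_{c_j(\bi)}^{c_j}=\bq^{\bc}_{(\alpha;\beta)}$ by the definition in the lemma; substituting these into the trace expression and reading off $\tilde{q}_{(\alpha;\beta)}(\bx/\by;q)$ yields the asserted identity. The main obstacle is the combined super-sign bookkeeping and positional vanishing argument: one must carefully verify that each odd coincidence yields $-q^{-1}$ rather than $q$ (producing the overall sign $(-1)^{|\beta|-\ell(\beta)}$), and confirm that for non-weakly-increasing $\bi$ every admissible branch of swap choices in the iterated expansion of $T_nT_{n-1}\cdots T_2(v_\bi)$ yields a basis vector strictly different from $v_\bi$, the latter resting on the fact that a swap at step $a$ permanently alters position $a-1$ while no later step $T_b$ ($b>a$) can touch that position.
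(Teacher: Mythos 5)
Your proposal is correct and follows essentially the same route as the paper: the paper's proof simply delegates the Ram-style computation of the diagonal coefficients of $T_n\cdots T_2$ (only weakly increasing $\bi=(\alpha;\beta)$ survive, with coefficient $(-1)^{|\beta|-\ell(\beta)}q^{|\alpha|-\ell(\alpha)+\ell(\beta)-|\beta|}(q-q^{-1})^{\ell(\alpha;\beta)-1}$) to \cite[Lemma~3.1]{Zhao}, and then notes that the diagonal operators $\Omega_j$ contribute the factor $\bq^{\bc}_{(\alpha;\beta)}$, exactly as you do. Your write-up just carries out explicitly the positional-induction/first-swap argument that the cited lemma encapsulates.
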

\begin{proof}
The computation of the action of $DT_{n}\cdots T_2$ is completely the same as the proof of \cite[Lemma~3.1]{Zhao}. Notice that in this computation, Equ.~(\ref{Equ:Ta-action}) implies that the contribution of the term $\bx^{*}(-\by)^{*}$ comes form the basis vector $\bi=(\alpha;\beta)$ with $(\alpha;\beta)\in\mathscr{C}(n;k|\ell)$ of $V^{\otimes n}$, and $\Omega_j(\bi)=Q_{c_j(\bi)}\bi$ for all $\bi\in \mathscr{I}^{+}(n;k|\ell)$. We complete the proof.
\end{proof}

For $\bc=(c_1, c_2, \ldots,c_m)\in\mathscr{C}(n;m)$. If $(\alpha;\beta)\in \mathscr{C}(n,k|\ell)$ satisfies $|\alpha^{(i)}|+|\beta^{(i)}|=c_i$ for $i=1,\ldots, m$, we write $(\alpha;\beta)\in\bc$.
The following formula expresses the trace of  $DT(n,i)$ on $V^{\otimes n}$ in terms of super Hall-Littlewood functions.

\begin{proposition}\label{Prop:Trace-DT(n,i)}Let $q_n^{(i)}(\bx/\by;q,\bq)=\mathrm{Trace}(DT(n,i), V^{\otimes n})$. Then
\begin{equation*}
  q_n^{(i)}(\bx/\by;q,\bq)=\frac{q^n}{q-q^{-1}}\sum_{\bc\in\mathscr{C}(n;m)}
  Q_{\bc}^i\prod_{j=1}^mq_{c_j}\left(\bx^{(j)}/\by^{(j)};q^{-2}\right).
\end{equation*}
where $Q_{\bc}$ denotes $Q_a$ with $a$ being the largest color of positive integers in $\mathbf{c}$.
\end{proposition}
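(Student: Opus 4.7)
The plan is to specialize Lemma~\ref{Lemm:Trace-Tw} to $w = t_n^i s_n \cdots s_2$, for which $T(w) = T(n,i)$ and the only nonzero exponent in $w = t_1^{c_1}\cdots t_n^{c_n}s_n\cdots s_2$ is $c_n = i$. Lemma~\ref{Lemm:Trace-Tw} then yields
\begin{equation*}
q_n^{(i)}(\bx/\by;q,\bq) \;=\; \sum_{(\alpha;\beta)\in\mathscr{C}(n;k|\ell)} Q_{c_n(\bi)}^{\,i}\, \tilde{q}_{(\alpha;\beta)}(\bx/\by;q), \qquad \bi = (\alpha;\beta).
\end{equation*}
The key observation is that, under the linear ordering fixed in the excerpt, the color $c_n(\bi)$ of the last slot of $\bi=(\alpha;\beta)$ equals the largest index $j$ with $|\alpha^{(j)}|+|\beta^{(j)}|>0$. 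Hence, if one groups compositions by the color-profile $\bc = (c_1,\ldots,c_m)\in\mathscr{C}(n;m)$ with $c_j=|\alpha^{(j)}|+|\beta^{(j)}|$, then the coefficient $Q_{c_n(\bi)}^{\,i}$ is constant on each stratum and coincides with $Q_{\bc}^{\,i}$ in the sense of the proposition.

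Next, for a fixed $\bc$ with $J := \{j:c_j>0\}$, I would factor each $\tilde{q}_{(\alpha;\beta)}(\bx/\by;q)$ into a product indexed by colors. A direct inspection of the defining expression shows that $(-1)^{|\beta|-\ell(\beta)}$, $q^{|\alpha|-\ell(\alpha)+\ell(\beta)-|\beta|}$ and the monomials $\bx^{\alpha}(-\by)^{\beta}$ all split multiplicatively along the blocks $\alpha^{(j)},\beta^{(j)}$. The only subtlety lies in the factor $(q-q^{-1})^{\ell(\alpha;\beta)-1}$: since $\ell(\alpha;\beta) = \sum_{j\in J}\ell(\alpha^{(j)};\beta^{(j)})$, one obtains
\begin{equation*}
\tilde{q}_{(\alpha;\beta)}(\bx/\by;q) \;=\; (q-q^{-1})^{|J|-1}\prod_{j\in J}\tilde{q}_{(\alpha^{(j)};\beta^{(j)})}\!\left(\bx^{(j)}/\by^{(j)};q\right).
\end{equation*}
Summing over $(\alpha;\beta)\in\bc$, each $j$-factor becomes an independent sum over $\mathscr{C}(c_j;k_j|\ell_j)$.

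Finally, I would apply Equation~(\ref{Equ:q-q}) to each factor with $j\in J$ to obtain $\frac{q^{c_j}}{q-q^{-1}}\,q_{c_j}(\bx^{(j)}/\by^{(j)};q^{-2})$. The prefactor $(q-q^{-1})^{|J|-1}$ together with the $|J|$ denominators leaves a single $(q-q^{-1})^{-1}$, while $\prod_{j\in J}q^{c_j}=q^n$. Since $q_0(\cdot;q^{-2})=1$, the product over $J$ extends harmlessly to all $j=1,\ldots,m$, and a final summation over $\bc\in\mathscr{C}(n;m)$ produces the stated identity. The main obstacle I anticipate is precisely the bookkeeping of the $(q-q^{-1})$ prefactors: the exponent $\ell(\alpha;\beta)-1$ in $\tilde{q}$ is inherently global, while the target expression is a product over colors, so one must keep careful track of how the global $-1$ interacts with the empty components ($c_j=0$) to avoid generating spurious $(q-q^{-1})^{-1}$ factors. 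Once this is correctly handled via $|J|$ rather than $m$ and the convention $q_0=1$, the remaining manipulations are purely formal.
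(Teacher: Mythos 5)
Your proposal is correct and follows essentially the same route as the paper: specialize Lemma~\ref{Lemm:Trace-Tw} to $T(n,i)$, note that the coefficient is constant equal to $Q_{\bc}^i$ on each color stratum $\bc$, factor $\tilde{q}_{(\alpha;\beta)}$ blockwise over colors, and apply Eq.~(\ref{Equ:q-q}) to each block. The only (cosmetic) difference is your bookkeeping of empty blocks via $|J|$ and $q_0=1$, where the paper writes the prefactor uniformly as $(q-q^{-1})^{m-1}$, implicitly treating the empty composition as contributing $(q-q^{-1})^{-1}$, consistent with Eq.~(\ref{Equ:q-q}) at $n=0$; both conventions yield the same result.
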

\begin{proof} Notice that $Q_{(\alpha;\beta)}$ is independent for $(\alpha;\beta)\in \bc$ for fixed $\bc\in\mathscr{C}(n;m)$.   Thanks to Lemma~\ref{Lemm:Trace-Tw},
   \begin{eqnarray*}q_n^{(i)}(\bx,\by;q,\bq)&=&\sum_{\substack{\bc\in\mathscr{C}(n;m)}}Q^i_{\bc}
   \sum_{(\alpha;\beta)\in \bc}\tilde{q}_{(\alpha;\beta)}(\bx/\by;q)\\
   &=&(q-q^{-1})^{m-1}\sum_{\substack{\bc\in\mathscr{C}(n;m)}}Q^i_{\bc}
   \prod_{j=1}^m\sum_{(\alpha^{(j)};\beta^{(j)})\in \mathscr{C}(c_j,k_j|\ell_j)}\tilde{q}_{(\alpha^{(j)};\beta^{(j)}}(\bx^{(j)}/\by^{(j)};q)\\
   &=&(q-q^{-1})^{m-1}\sum_{\substack{\bc\in\mathscr{C}(n;m)}}Q^i_{\bc}
   \prod_{j=1}^m\left(\frac{q^{c_j}}{q-q^{-1}}q_{c_j}(\bx^{(j)}/\by^{(j)};q^{-2})\right)\\
   &=&\frac{q^{n}}{q-q^{-1}}\sum_{\substack{\bc\in\mathscr{C}(n;m)}}Q^i_{\bc}
   \prod_{j=1}^mq_{c_j}(\bx^{(j)}/\by^{(j)};q^{-2}),\end{eqnarray*}
   where the third equality follows from Equ.~(\ref{Equ:q-q}). It completes the proof.
\end{proof}

Let $q_n^{(i)}(\bx/\by;q,\bq)$ be as in Proposition~\ref{Prop:Trace-DT(n,i)}. For $\bmu=(\mu^{(1)};\mu^{(2)};\ldots;\mu^{(m)})\in\mpn$, we define a function $q_{\bmu}(\bx/\by;q,\bq)$ as follows:
\begin{equation}\label{Equ:q-bmu}
 q_{\bmu}(\bx/\by;q,\bq)=\prod_{i=1}^m\prod_{j=1}^{\ell(\mu^{(i)})}
 q^{(i)}_{\mu_j^{(i)}}(\bx/\by;q,\bq).
\end{equation}

Now we can obtain the super Frobenius formula for the characters of $\mathcal{H}$, which is a generalization of \cite[Theorem~6.14]{S} and \cite[Theorem~5.5]{M2010}.

\begin{theorem}\label{Them:Frobenius}
  For each $\bmu\in\mpn$,
  \begin{equation*}
    q_{\bmu}(\bx/\by;q,\bq)=\sum_{\bgl\in\mpn}\chi^{\bgl}(\bmu)S_{\bgl}(\bx/\by).
  \end{equation*}
\end{theorem}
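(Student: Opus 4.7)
The plan is to compute $\mathrm{Trace}(DT(\bmu),V^{\otimes n})$ in two different ways and equate them. On the one hand, applying Proposition~\ref{Prop:Trace-Dh} at $h=g(\bmu)$ (so that $\Phi(h)=T(\bmu)$) yields immediately
$$\mathrm{Trace}(DT(\bmu),V^{\otimes n})=\sum_{\bgl\in\mpn}\chi^{\bgl}(\bmu)S_{\bgl}(\bx/\by).$$
Hence the theorem will follow as soon as I establish the identity
$$\mathrm{Trace}(DT(\bmu),V^{\otimes n})=q_{\bmu}(\bx/\by;q,\bq). \qquad (\ast)$$

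To prove $(\ast)$, I exploit the block structure inherent in the definition of $g(\bmu)$ from \S\ref{subsec:standard-elements}. By construction, $g(\bmu)$ is assembled via the embeddings $\theta_{\mathbf{c}}$ from the factors $g(\mu_j^{(i)},i)$, and each such factor involves only the generators $g_a$ and $\xi_a$ whose indices lie in its own block of consecutive positions. Using the explicit action (\ref{Equ:Ta-action}) of $T_a$ (which only permutes two adjacent tensor slots) and the fact that $\Omega_j(\bi)=Q_{c_j(\bi)}\bi$ acts diagonally on a single tensor slot, it follows that under $\Phi$ the operator $T(\bmu)$ decomposes as a pure tensor product
$$T(\bmu)=\bigotimes_{(i,j)}T(\mu_j^{(i)},i),$$
where each factor $T(\mu_j^{(i)},i)$ acts on its own tensor block $V^{\otimes\mu_j^{(i)}}$. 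The operator $D$ factorises compatibly on any such block decomposition, because $P_{\bc}=\sum_{\mathrm{wt}(\bi)=\bc}P_{i_1}\otimes\cdots\otimes P_{i_n}$ and $z^{\bc}$ is multiplicative across subcompositions, so $D=\bigotimes D_{\text{block}}$. Taking traces and invoking Proposition~\ref{Prop:Trace-DT(n,i)} on each block I obtain
$$\mathrm{Trace}(DT(\bmu),V^{\otimes n})=\prod_{i=1}^{m}\prod_{j=1}^{\ell(\mu^{(i)})}\mathrm{Trace}\bigl(DT(\mu_j^{(i)},i),V^{\otimes\mu_j^{(i)}}\bigr)=\prod_{i=1}^{m}\prod_{j=1}^{\ell(\mu^{(i)})}q_{\mu_j^{(i)}}^{(i)}(\bx/\by;q,\bq),$$
and the right-hand side is exactly $q_{\bmu}(\bx/\by;q,\bq)$ by definition (\ref{Equ:q-bmu}). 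Combined with the first paragraph this completes the proof.

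The main obstacle is the tensor factorisation of $T(\bmu)$. While intuitively clear from the block structure of $g(\bmu)$, one must verify carefully that the operators $\Omega_a$ and $T_b$ appearing in each $T(\mu_j^{(i)},i)$ really do keep their action local to the block prescribed by $\theta_{\mathbf{c}}$; this amounts to a direct inspection of the formulas for $T_b$ and $\Omega_a$ on the basis $\mathscr{I}(n;k|\ell)$ and is where the book-keeping lies. Once this factorisation is established, the rest of the argument is the straightforward application of Propositions~\ref{Prop:Trace-Dh} and~\ref{Prop:Trace-DT(n,i)} described above.
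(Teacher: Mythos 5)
Your proposal is correct and follows essentially the same route as the paper: both compute $\mathrm{Trace}(DT(\bmu),V^{\otimes n})$ once via Proposition~\ref{Prop:Trace-Dh} and once via the block factorisation coming from Eq.~(\ref{Equ:T-bmu}) together with Proposition~\ref{Prop:Trace-DT(n,i)}, then equate the two. Your extra care in checking that $D$ and $T(\bmu)$ factor compatibly across the blocks is exactly the step the paper leaves implicit, so there is nothing to change.
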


\begin{proof}  For each $\bmu\in\mpn$, we have
\begin{eqnarray*}
 \sum_{\bgl\in\mpn}\chi^{\bgl}(\bmu)S_{\bgl}(\bx/\by) &=&\mathrm{Trace}(DT({\bmu}),V^{\otimes n})\\&=&\prod_{i=1}^m\prod_{j=1}^{\ell(\mu^{(i)})}\mathrm{Trace}\left(DT(\mu^{(i)},i),V^{\otimes \mu^{(i)}_j}\right)\\
 &=&\prod_{i=1}^m\prod_{j=1}^{\ell(\mu^{(i)})}
 q^{(i)}_{\mu_j^{(i)}}(\bx/\by;q,\bq)\\
 &=&q_{\bmu}(\bx/\by;q,\bq),
\end{eqnarray*}
where the first and the second equality follows by applying Proposition~\ref{Prop:Trace-Dh} and Eq.~(\ref{Equ:T-bmu}) respectively, the third one follows by Proposition~\ref{Prop:Trace-DT(n,i)}.
\end{proof}

We close the paper with the following remark which was suggested to the author by the referee.

\begin{remark}\label{Remark:Super-Frobenius} By applying the specialization homomorphism $\phi$ and Theorem~\ref{Them:Frobenius}, one obtains the super Frobenius formula for the complex reflection group $W_{m,n}$, that is, for each $\bgl, \bmu\in\mpn$, let $\chi^{\bgl}$ be the irreducible character of the irreducible representation of $W_{m,n}$ corresponding to $\bgl$ and write $\chi^{\bgl}(\bmu)=\chi^{\bgl}(w(\bmu))$. Then
  \begin{equation*}
    q_{\bmu}(\bx/\by;1,\boldsymbol{\varsigma})=\sum_{\bgl\in\mpn}\chi^{\bgl}(\bmu)S_{\bgl}(\bx/\by),
  \end{equation*}
where $\boldsymbol{\varsigma}=(\varsigma,\varsigma^2,\ldots, \varsigma^m=1)$.
\end{remark}


\begin{thebibliography}{abcdefg}
\bibitem{AK}S. Ariki and K. Koike, \textit{A Hecke algebra of} $(\bz/r \bz)\!\wr\!S_n$
\textit{and construction of its irreducible representations}, Adv.  Math.  {\bf 106} (1994), 216--243.

\bibitem{ATY}S. Ariki, T. Terasoma and H. Yamada, \textit{Schur-Weyl reciprocity for the Hecke algebra of} $(\bz/r \bz)\!\wr\!S_n$, J. Algebra  {\bf 178} (1995), 374--390.

\bibitem{BKK}G. Benkart, S.-J. Kang and M. Kashiwara, \textit{Crystal bases for the quantum superalgebra $U_q(\mathfrak{gl}(m,n))$}, J. Amer. Math. Soc. \textbf{13} (2000),293--331.

\bibitem{B-Regev}A. Berele and A. Regev, \textit{Hook Young diagrams with applications to combinatorics and to representations of Lie superalgebras}, Adv. Math. \textbf{64} (1987), 118--175.

\bibitem{Frobenius}F.G. Frobenius, \textit{Ober die Charactere der symmetrischen Gruppe}, Sitzungsber. K. Preuss. Akad. Wisse. Berlin, 516--534 (1900) (Reprinted in: Gessamelte Abhandlungen III, 148--166, Berlin Heidelberg New York: Springer 1973)

\bibitem{Hu}J. Hu, \textit{Schur-Weyl reciprocity between quantum groups and Hecke algebras of type $G(r,1,n)$}, Math. Z. \textbf{238} (2001), 505--521.

\bibitem{JK}G. James and A. Kerber, \textit{The representation theory of the symmetric group}, Encyclopedia of Mathematics and its Applications, Vol. \textbf{16}, Addison--Wesley Publishing Co., Reading, Mass., 1981.

\bibitem{Jimbo}M. Jimbo, \textit{A $q$-analogue of $U(\mathfrak{gl}(N+1))$, Hecke algebra, and the Yang-Baxter equation}, Lett. Math. Phys. \textbf{11} (1986), 247--252.

\bibitem{King}R.C. King, \textit{Supersymmetric functions and Lis supergroup $U(m/n)$}, Ars  Comb. \textbf{16A} (1983), 269--287.

 \bibitem{Macdonald} I.G. Macdonald, \textit{Symmetric Functions and Hall Polynomials}, 2nd edition, Oxford University Press, Oxford, 1995.

\bibitem{Mit}H. Mitsuhashi, \textit{Schur-Weyl reciprocity between the quantum superalgebra and the Iwahori-Heke algebra}, Algebr. Represent. Theory \textbf{9} (2006), 309--322.

\bibitem{M2010}H. Mitsuhashi, \textit{A super Frobenius formual for the characters of Iwahori-Hecke algebras}, Linear and Multilinear Algebra \textbf{58} (2010),941--955.

\bibitem{Moon}D. Moon, \textit{Highest weight vectors of irreducible representations of the quantum superalgebra $\mathfrak{U}_q(gl(m,n))$}, J. Korean Math. Soc. \textbf{40} (2003), 1--28.


\bibitem{Ram} A. Ram, \textit{A Frobenius formula for the characters of the Hecke algebra}, Invent. Math. \textbf{106} (1991), 461--488.

\bibitem{SS}M. Sakamoto and T. Shoji, \textit{Schur-Weyl reciprocity for Ariki-Koike algebras}, J. Algebra \textbf{221} (1999), 293--314.

\bibitem{Schur-d}I. Schur,\textit{ Uber eine Klasse von Matrizen, die sich einer gegeben Matrix zuordnen lassen}, Dissertation, 1901.  (Reprinted in: Gessamelte Abhandlungen I, 1--72.
Berlin Heidelberg New York: Springer 1973).

\bibitem{Schur}I. Schur, \textit{\"{U}ber die rationalen Darstellungen der allgemeinen linearen Gruppe}, Sitzungsberichte Akad. Berlin 1927, 58--75(Reprinted as: I. Schur, Gesammelte Abhandlungen III,  68--85, Berlin Heidelberg New York: Springer Springer 1973).


\bibitem{Serg}A.N. Sergeev, \textit{The tensor algebra of the identity representation as a module over the Lie superalgebras $\mathfrak{Gl}(n,m)$ and $Q(n)$}, Math. USSR Sbornik \textbf{51} (1985), 419--427.

 \bibitem{shephard-toda}G.C. Shephard and J.A. Toda,  \textit{Finite unitary reflection groups}, Canad. J. Math.  \textbf{6} (1954), 273--304.

\bibitem{S}T. Shoji, \textit{A Frobenius formula for the characters of Ariki--Koike algebras}, J. Algebra \textbf{226} (2000), 818--856.

\bibitem{Zhang}R.B. Zhang, \textit{Finite dimensional irreducible representations of the quantum supergroup
$U_q(gl(m/n))$}, J. Math. Phys. \textbf{34} (1993), 1236--1254.

\bibitem{Zhao18}D.K. Zhao,  \textit{A super Schur-Weyl reciprocity for cyclotomic Hecke algebras}, arXiv:1808.08484.

\bibitem{Zhao}D.K. Zhao, \textit{Charaters on Iwahori-Hecke algebras}, Israel J. Math.  \textbf{229} (2019), 67--83.
\end{thebibliography}
\end{document}